\pgfplotsset{compat=1.15}
\newtheorem{obs} [subsection]{Remark}
\newtheorem{conj}[subsection]{Conjecture}
\newtheorem{teor}[subsection]{Theorem}
\newtheorem{lema}[subsection]{Lemma}
\newtheorem{cor} [subsection]{Corollary}
\def\qdepth{\operatorname{hdepth}}
\begin{document}
\selectlanguage{english}
\frenchspacing

\numberwithin{equation}{section}

\title{On the Hilbert depth of quadratic and cubic functions}
\author[Silviu B\u al\u anescu and Mircea Cimpoea\c s]{Silviu B\u al\u anescu$^1$ and Mircea Cimpoea\c s$^2$}
\date{}

\keywords{Hilbert depth, Integer sequence, Quadratic function, Cubic function}

\subjclass[2020]{33B15, 13D40}

\footnotetext[1]{ \emph{Silviu B\u al\u anescu}, University Politehnica of Bucharest, Faculty of
Applied Sciences, 
Bucharest, 060042, E-mail: silviu.balanescu@stud.fsa.upb.ro}
\footnotetext[2]{ \emph{Mircea Cimpoea\c s}, University Politehnica of Bucharest, Faculty of
Applied Sciences, 
Bucharest, 060042, Romania and Simion Stoilow Institute of Mathematics, Research unit 5, P.O.Box 1-764,
Bucharest 014700, Romania, E-mail: mircea.cimpoeas@upb.ro,\;mircea.cimpoeas@imar.ro}

\begin{abstract}
Given a numerical function $h:\mathbb Z_{\geq 0}\to\mathbb Z_{\geq 0}$ with $h(0)>0$, the Hilbert depth of $h$
is $\qdepth(h)=\max\{d\;:\;\sum\limits_{j=0}^k (-1)^{k-j}\binom{d-j}{k-j}h(j)\geq 0\text{ for all }k\leq d\}.$

In this note, we study the Hilbert depth of the functions $h_2(j)=aj^2+bj+e$, $j\geq 0$, and 
$h_3(j)=aj^3+bj^2+cj+e$, $j\geq 0$, where $a,b,c,e$ are some integers with $a,e>0$. 
We prove that if $b<0$ and $b^2\leq 4ae$ then $\qdepth(h_2)\leq 11$, and, if $b<0$
and $b^2>4ae$ then $\qdepth(h_2)\leq 13$. Also, we show that if $b<0$ and $b^2\leq 3ac$ then $\qdepth(h_3)\leq 67$.
\end{abstract}

\maketitle

\section{Introduction}

Let $S=K[x_1,\ldots,x_n]$ be the ring of polynomials in $n$ variables over a field $K$.
The Hilbert depth of a finitely graded $S$-module $M$ is the maximal depth of a finitely graded $S$-module $N$ with the same
Hilbert series as $M$; see \cite{uli} for further details.

In \cite{lucrare7} we introduced the notion of (arithmetic) Hilbert depth of a numerical function with nonnegative integer values, as a
generalization of the Hilbert depth of quotient $M=J/I$ of two squarefree monomial ideals $0\subset I\subsetneq J\subset S$, 
which we previously studied in \cite{lucrare2}. 

Here we will use a slightly modified version of it. Let
$$\mathcal H_0:=\{h:\mathbb Z_{\geq 0}\to \mathbb Z_{\geq 0}\;:\;h(0)>0\}.$$
Let $h\in \mathcal H_0$. The \emph{Hilbert depth} of $h$ is 
$$\qdepth(h)=\max\{d\;:\;\beta_k^d(h):=\sum_{j=0}^k (-1)^{k-j}\binom{d-j}{k-j}h(j)\geq 0\text{ for all }0\leq k\leq d\}.$$
According to \cite[Proposition 1.5]{lucrare7} we have that 
\begin{equation}\label{11}
\qdepth(h)\leq c(h):=\left\lfloor \frac{h(1)}{h(0)} \right\rfloor.
\end{equation}
We recall the following result:

\begin{teor}\label{t1}(see \cite[Theorem 1.15]{lucrare7})
Let $h\in\mathcal H_0$, $h(j)=a_nj^n+\cdots+a_1j+a_0,\; j\geq 0$, such that $a_i\geq 0$ for all $1\leq i\leq n$
and $a_0>0$. Then $$\qdepth(h)\leq 2^{n+1}.$$
\end{teor}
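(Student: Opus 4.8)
The plan is to work with the generating function of the sequence $\bigl(\beta_k^d(h)\bigr)_k$ and to prove that, once $d=2^{n+1}+1$, at least one of these numbers with $k\le d$ is negative. This already suffices: from Pascal's rule $\binom{d-j}{k-j}=\binom{d-1-j}{k-j}+\binom{d-1-j}{k-1-j}$ one obtains the recursion $\beta_k^d(h)=\beta_k^{d-1}(h)-\beta_{k-1}^{d-1}(h)$, so if all $\beta_k^d(h)\ge 0$ for $k\le d$ then $\beta_0^{d-1}(h)\le\beta_1^{d-1}(h)\le\cdots\le\beta_{d}^{d-1}(h)$, and since $\beta_0^{d-1}(h)=h(0)>0$ all the $\beta_k^{d-1}(h)$ are $\ge 0$ as well; hence the set of admissible $d$ is a down-set and a single violation at $2^{n+1}+1$ forces $\qdepth(h)\le 2^{n+1}$. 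Interchanging the order of summation in the definition and using $\sum_{m\ge 0}(-1)^m\binom{d-j}{m}t^m=(1-t)^{d-j}$ gives
\[
\sum_{k\ge 0}\beta_k^d(h)\,t^k=(1-t)^d\,\H\!\left(\tfrac{t}{1-t}\right),
\]
where $\H(u)=\sum_{j\ge 0}h(j)u^j$ is the Hilbert series. Writing $\H(u)=Q(u)/(1-u)^{n+1}$ with $\deg Q\le n$, $Q(0)=h(0)>0$, $Q(1)=a_n\,n!>0$, and substituting $u=t/(1-t)$ (so that $1-u=(1-2t)/(1-t)$) turns this into
\[
\sum_{k\ge 0}\beta_k^d(h)\,t^k=\frac{(1-t)^{d+1}\,\widetilde Q(t)}{(1-2t)^{n+1}},\qquad \widetilde Q(t):=(1-t)^nQ\!\left(\tfrac{t}{1-t}\right),
\]
a polynomial of degree $\le n$ with $\widetilde Q(0)=h(0)>0$ and $\widetilde Q(1/2)=2^{-n}Q(1)>0$. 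All the sign information therefore sits in a single pole of order $n+1$ at $t=1/2$.

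Next I would read off the signs. Expanding $(1-t)^{d+1}=2^{-(d+1)}\sum_{r=0}^{d+1}\binom{d+1}{r}(1-2t)^r$ and setting $c_k:=[t^k]\,(1-t)^{d+1}/(1-2t)^{n+1}$, the terms with $r\le n$ produce negative powers of $1-2t$, i.e. power series with nonnegative coefficients of size $\sim 2^k$ weighted by the polynomially large $\binom{d+1}{r}=O(d^n)$, whereas the terms with $r\ge n+1$ are polynomials in which the coefficient of $t^k$ carries the alternating factor $(-2)^k$ and the weights $\binom{d+1}{r}$ attain the exponentially large central value $\sim 2^{d}/\sqrt d$ near $r=(d+1)/2$. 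Collecting these one finds $c_k=2^{-(d+1)}2^k\bigl(A_k+(-1)^kB_k\bigr)$ with $A_k\ge 0$ polynomially bounded in $d$ and $B_k\ge 0$ of exponential size once $k\lesssim d/2$. Hence $c_k>0$ for even $k$, while for odd $k$ the sign is that of $A_k-B_k$, which becomes negative as soon as the central binomial mass in $B_k$ dominates. Finally $\beta_k^d(h)=\sum_{l=0}^n\widetilde q_l\,c_{k-l}$ is the $\widetilde q$-weighted convolution of these, where the $\widetilde q_l$ are the coefficients of $\widetilde Q$.

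The main difficulty is to turn this crossover into the exact bound $2^{n+1}$ uniformly in the coefficients $a_0,\dots,a_n$. Already for $n\ge 3$ the polynomial $\widetilde Q$ need not have nonnegative coefficients, so the convolution $\sum_l\widetilde q_l c_{k-l}$ cannot be estimated term by term; one must instead use that the dominant exponential scale is governed by the positive boundary value $\widetilde Q(0)=h(0)$ together with $\widetilde Q(1/2)>0$, and then locate the smallest odd $k\le d$ at which the exponential part overwhelms the polynomial part. The extremal profile — the admissible $h$ whose Hilbert depth is largest — is the one dominated by its leading term, for which $\beta_1^d(h)$ stays positive (so the crude bound $\qdepth(h)\le\lfloor h(1)/h(0)\rfloor$ of \eqref{11} is useless) and the first negative index is pushed as high as possible; executing the binomial estimate for this extremal configuration is what pins the threshold at $d=2^{n+1}+1$. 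The last step is to verify that this extremal estimate dominates every admissible coefficient vector in the cone $a_i\ge 0$, $a_0>0$; this uniformity, together with the precise binomial bookkeeping producing the constant $2^{n+1}$, is the crux of the argument.
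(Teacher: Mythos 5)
There is a genuine gap: your argument stops precisely where the theorem begins. The preparatory steps are all correct --- the Pascal recursion $\beta_k^d(h)=\beta_k^{d-1}(h)-\beta_{k-1}^{d-1}(h)$ and the resulting fact that the admissible $d$'s form a down-set, and the identity $\sum_k\beta_k^d(h)\,t^k=(1-t)^{d+1}\widetilde Q(t)/(1-2t)^{n+1}$ --- but these are reformulations. The content of the statement is that some $\beta_k^d(h)$ with $k\leq d$ is negative already at $d=2^{n+1}+1$, uniformly over the cone $a_i\geq 0$, $a_0>0$, and this is exactly the step you defer (``the precise binomial bookkeeping producing the constant $2^{n+1}$ is the crux''). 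The polynomial-versus-exponential dichotomy you invoke separates $A_k$ from $B_k$ only when $d$ is large relative to $n$; at the actual threshold $d=2^{n+1}+1$ the two scales are comparable (for $n=1$, $d=5$, the ``exponentially large'' central coefficient is $\binom{6}{3}=20$), so the heuristic cannot pin down the constant without doing the computation exactly. Moreover, the sign analysis you carry out concerns the kernel coefficients $c_k$, not $\beta_k^d(h)=\sum_l\widetilde q_l\,c_{k-l}$; since $\widetilde Q$ may have negative coefficients (as you note), positivity of $c_k$ for even $k$ says nothing about $\beta_2^d(h)$ --- and $k=2$ is in fact where the negativity occurs. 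Note also that the present paper does not prove this theorem at all: it imports it from \cite[Theorem 1.15]{lucrare7}.

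The statement has a short elementary proof needing only $k=1,2$, in the spirit of \eqref{11} and Lemma \ref{l13}. Put $d=2^{n+1}+1$ and $A:=\sum_{i=1}^n a_i=h(1)-a_0\geq 0$. If $A<(d-1)a_0$ then $\beta_1^d(h)=h(1)-dh(0)=A-(d-1)a_0<0$. Otherwise $A\geq 2^{n+1}a_0$, and since $h(2)\leq a_0+2^nA$,
\begin{equation*}
\beta_2^d(h)=\binom{d}{2}a_0-(d-1)(a_0+A)+h(2)\leq\Big(\binom{d}{2}-d+2\Big)a_0-(d-1-2^n)A\leq\big(2^{2n+1}+2^n-2^{n+1}+1-2^{2n+1}\big)a_0,
\end{equation*}
which equals $(1-2^n)a_0<0$ for $n\geq 1$. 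Either way $d=2^{n+1}+1$ is not admissible, and your down-set observation (which is the one genuinely useful ingredient in your write-up) finishes the proof. I would recommend abandoning the generating-function route here: it is a fine way to interpret $\beta_k^d(h)$ as the numerator coefficients of a Hilbert series over $d$ variables, but for this bound it replaces a two-line estimate with an unresolved uniform crossover analysis.
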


Note that the upper bound given in Theorem \ref{t1} does not depend on the coefficients $a_i$'s, but only on $n$.
However, if we allow some $a_i$'s to be negative, then 
the situation become more complex. We propose the following conjecture:

\begin{conj}\label{conj}
For any $n\geq 1$, there exists a constant $C(n)>0$ such that
$$\qdepth(h)\leq C(n)\text{ for any }h\in \mathcal H_0\text{ with }h(j)=a_nj^n+\cdots+a_1n+a_0,\;
a_i\in\mathbb Z\text{ and }a_0>0.$$
\end{conj}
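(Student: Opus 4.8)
The plan is to reduce the sign analysis of the quantities $\beta_k^d(h)$ to a single rational generating function and then to study where its Taylor coefficients can turn negative. First I would record the identity
$$F_d^h(t):=\sum_{k\geq 0}\beta_k^d(h)\,t^k=(1-t)^d\,H\!\left(\frac{t}{1-t}\right),\qquad H(t):=\sum_{j\geq 0}h(j)t^j,$$
which follows by interchanging the order of summation and using $\sum_{m\geq 0}\binom{d-j}{m}(-t)^m=(1-t)^{d-j}$. When $h$ is a polynomial of degree $n$ one may write $H(t)=Q(t)/(1-t)^{n+1}$ with $\deg Q\leq n$, and substituting $s=t/(1-t)$ (so that $1-s=(1-2t)/(1-t)$) turns this into
$$F_d^h(t)=\frac{(1-t)^{d+1}\,\widetilde Q(t)}{(1-2t)^{n+1}},\qquad \widetilde Q(t):=(1-t)^nQ\!\left(\frac{t}{1-t}\right),\ \ \deg\widetilde Q\leq n.$$
Thus $\beta_k^d(h)$ is the $k$-th coefficient of a rational function whose only pole is at $t=1/2$ and whose numerator grows in degree with $d$ but is otherwise controlled by the fixed-degree polynomial $\widetilde Q$ determined by $h$.

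Second, I would analyse these coefficients through the partial-fraction decomposition $F_d^h(t)=P(t)+\sum_{i=1}^{n+1}c_i(1-2t)^{-i}$, where $P$ is a polynomial and the $c_i$ are read off from the Laurent expansion at $t=1/2$. The top coefficient is $c_{n+1}=2^{-(d+1)}\widetilde Q(1/2)$, and since $\widetilde Q(1/2)=2^{-n}Q(1)=2^{-n}a_n\,n!>0$ (the leading coefficient $a_n$ is forced to be positive by $h\geq 0$), one gets $\beta_k^d(h)\sim c_{n+1}\binom{k+n}{n}2^k>0$ for $k$ near $d$. Hence any negative coefficient must occur in a bounded intermediate window of indices $k$, where the transient polynomial $P$ competes with the pole. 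For fixed $n$, bounding $\qdepth(h)$ then amounts to showing that once $d$ exceeds a threshold this competition is always won by a negative term; the hypotheses $h(j)\geq 0$ and $a_0=h(0)>0$ are exactly what keep the coefficients of $\widetilde Q$, and hence the $c_i$, in a range that makes such a sign change unavoidable. This should reduce the conjecture, for each fixed $n$, to a finite system of polynomial inequalities in the normalised coefficients, whose feasibility region yields the constant $C(n)$.

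Finally, to obtain the explicit constants I would specialise to $n=2$ and $n=3$, expanding $\beta_k^d(h)$ through the closed forms of $\beta_k^d(1),\beta_k^d(j),\dots,\beta_k^d(j^n)$ and exploiting $b<0$ to pinpoint one index $k_0=k_0(d)$ at which $\beta_{k_0}^d(h)<0$; the two regimes $b^2\leq 4ae$ and $b^2>4ae$ (respectively $b^2\leq 3ac$ in the cubic case) control which quadratic/cubic estimates are available and therefore pin down the thresholds $11$, $13$ and $67$. The main obstacle is precisely the intermediate-window analysis for general $n$: the coefficients $c_i$ depend on all of $a_0,\dots,a_n$, and as $n$ grows both the length of the window and the number of competing cases increase, so that extracting a single constant $C(n)$ that is uniform in the $a_i$—rather than merely a bound depending on them—becomes delicate. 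This is why I expect the clean statement to be provable only for small $n$ (and under the stated discriminant hypotheses), leaving the uniform conjecture open.
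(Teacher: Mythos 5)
Note first that the statement you are proving is stated in the paper as a \emph{conjecture}: the paper itself only establishes it for $n\leq 2$ (Theorems \ref{teo1} and \ref{teo11}, combined in Corollary \ref{cor11}) and for $n=3$ under the extra hypothesis $b^2\leq 3ac$ when $b<0$; the general case is left open. Your opening identity is correct and worth having: interchanging sums does give $\sum_k\beta_k^d(h)t^k=(1-t)^d H(t/(1-t))=(1-t)^{d+n+1}Q(t/(1-t))/(1-2t)^{n+1}$, and $\widetilde Q(1/2)=2^{-n}Q(1)=2^{-n}a_n\,n!>0$ is right. But the central step of your plan fails. In the partial-fraction decomposition $F_d^h(t)=P(t)+\sum_i c_i(1-2t)^{-i}$, the polynomial part $P$ has degree exactly $d$ (numerator degree $d+n+1$ minus pole order $n+1$), so $P$ contributes to \emph{every} coefficient $\beta_k^d(h)$ with $0\leq k\leq d$. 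The asymptotic $\beta_k^d(h)\sim c_{n+1}\binom{k+n}{n}2^k$ is valid only for $k>\deg P=d$, i.e.\ precisely outside the range of $k$ that enters the definition of $\qdepth$. Consequently the claim that negativity is confined to a ``bounded intermediate window'' near which a fixed transient competes with the pole is not justified, and nothing in the sketch shows that a negative coefficient must appear once $d$ exceeds a threshold.

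The second unsupported step is the reduction to ``a finite system of polynomial inequalities whose feasibility region yields $C(n)$.'' The normalised coefficients are \emph{not} confined to a compact region by the hypotheses $h\geq 0$, $h(0)>0$: Remark \ref{remi} of the paper exhibits $h_2(j)=k^2j^2+(k-k^2)j+1$ with $\alpha=k^2$, $\beta=k-k^2$ unbounded, and this non-compactness is exactly why the paper needs two separate arguments (a $\beta_2^d$ analysis when $b^2\leq 4ae$, and a $\beta_3^d$ analysis when $b^2>4ae$) rather than one uniform estimate. Your sketch neither produces the thresholds $11$, $13$, $67$ nor a candidate $C(n)$, and you concede at the end that the uniform statement remains open. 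By contrast, the paper's route for $n=2$ is elementary and explicit: it writes $\beta_2^d(h_2)$ and $\beta_3^d(h_2)$ as quadratic, respectively cubic, polynomials in $d$ with coefficients in $\alpha=a/e$, $\beta=b/e$, combines the a priori bound $\qdepth(h)\leq\lfloor h(1)/h(0)\rfloor$ with a discriminant case split, and locates an explicit $d$ (namely $d=12$ or $d=14$) with $\beta_2^d<0$ or $\beta_3^{d}<0$. Your generating-function framework could in principle reproduce those computations, but as written it does not close the gap for any $n$, let alone all $n$.
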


Note that, the case $n=1$ follows immediately from Theorem \ref{t1}.
The aim of this note is to tackle this problem for $n=2$ and $n=3$, that is, to 
study the Hilbert depth of the functions $h_2,h_3\in\mathcal H_0$, defined by
$$h_2(j)=aj^2+bj+e,\;j\geq 0,\text{ and }h_3(j)=aj^3+bj^2+cj+e,\;j\geq 0,\text{ where }a,e>0.$$
Our method consist in studying the functions
\begin{align*}
& f(x)=\frac{1}{e}\beta_2^x(h),\text{ where }\beta_2^x(h)=\binom{x}{2}h(0)-(x-1)h(1)+h(2)\text{ and } \\
& g(x)=\frac{1}{e}\beta_3^x(h),\text{ where }\beta_2^x(h)=-\binom{x}{3}h(0)+\binom{x-1}{2}h(1)-(x-2)h(2)+h(3),
\end{align*}
and $h=h_2$ or $h=h_3$. In Theorem \ref{teo1} we prove that 
$$\qdepth(h_2)\leq 11,\text{ if }b<0\text{ and }b^2\leq 4ae.$$
Also, in Theorem \ref{teo11} we show that 
$$\qdepth(h_2)\leq 13,\text{ if }b<0\text{ and }b^2>4ae.$$
Since, according to Theorem \ref{t1}, $\qdepth(h_2)\leq 8$ for $b\geq 0$,
it follows that Conjecture \ref{conj} holds for $n=2$; see Corollary \ref{cor11}.

In the case of $h_3$, we are able to give only partial answers to Conjecture \ref{conj}.
More precisely, in Theorem \ref{teo2}, we show that
$$\qdepth(h_3)\leq 60,\text{ if }b<0\text{ and }b^2\leq 3ac.$$
Also, in Remark \ref{ultima} we describe a method which gives upper bounds for $\qdepth(h_3)$, when the 
coefficients of $h_3$ satisfy certain inequalities.

\section{The quadratic function}

Let $h_2\in \mathcal H_0$ be defined by
$$h_2(j):=aj^2+bj+e\text{ for all }j\geq 0,$$ 
where $a,b,e\in \mathbb Z$, $a\neq 0$ and $h_2(0)=e>0$. 

Since, by hypothesis, $h_2(j)\geq 0$ for all $j\geq 0$, it follows that $a>0$.
We denote $\alpha:=\frac{a}{e}$ and $\beta:=\frac{b}{e}$. By \eqref{11}, we have that
\begin{equation}\label{ch}
\qdepth(h_2)\leq c(h_2)=\left\lfloor \frac{h_2(1)}{h_2(0)} \right \rfloor = \lfloor \alpha + \beta \rfloor + 1.
\end{equation}
We denote $\Delta=b^2-4ae$. Of course, if $\Delta<0$ then $h_2(j)>0$ for all $k\geq 0$.

\begin{lema}\label{l1}
With the above notations:
\begin{enumerate}
\item[(1)] If $a+b\geq 0$ then $h_2(j)>0$ for all $j\geq 0$. Also, $\qdepth(h_2)\geq 1$.
\item[(2)] If $a+b\geq 0$ and $\Delta=0$ then there exists an integer $k\geq 2e$ such that
           $ae=k^2$ and $b=\pm 2k$.
\item[(3)] If $a+b<0$ then $\qdepth(h_2)=0$.
\item[(4)] If $a+b<0$ and $\Delta = 0$ then $h_2(j)\geq 0$ for all $j\geq 0$.
           Moreover, $h_2(j)=0$ if and only if $j=-\frac{b}{2a}\in\mathbb Z$.
\item[(5)] If $a+b<0$ and $\Delta > 0$ then $h_2(j)\geq 0$ for all $j\geq 0$ 
           if and only if $\Delta\leq a^2$ and 
           $$ I:=\left[ \frac{-b-\sqrt{\Delta}}{2a},\frac{-b+\sqrt{\Delta}}{2a} \right] \subseteq 
					    [\ell,\ell+1],$$
						where $\ell:=\left\lfloor \frac{-b-\sqrt{\Delta}}{2a} \right\rfloor$.
\end{enumerate}
\end{lema}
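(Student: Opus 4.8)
The plan is to handle the five parts by combining elementary sign analysis of the quadratic $h_2$ with direct evaluation of the two lowest coefficients $\beta_0^d(h_2)$ and $\beta_1^d(h_2)$ from the definition of $\qdepth$. The unifying observation is that $\beta_1^1(h_2)=h_2(1)-h_2(0)=a+b$, so the sign of $a+b$ is exactly what decides whether $\qdepth(h_2)\geq 1$, while $\Delta$ controls the factorization of $h_2$.

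For part (1) I would first establish positivity by writing $h_2(j)-e=j(aj+b)$ and noting that for $j\geq 1$ one has $aj+b\geq a+b\geq 0$ (using $a>0$), whence $h_2(j)\geq e>0$; the case $j=0$ is immediate. For $\qdepth(h_2)\geq 1$ it then suffices to verify the two inequalities at $d=1$, namely $\beta_0^1(h_2)=h_2(0)=e>0$ and $\beta_1^1(h_2)=a+b\geq 0$. Part (3) is the mirror image: $d=0$ always satisfies the defining condition since $\beta_0^0(h_2)=e>0$, while $\beta_1^1(h_2)=a+b<0$ rules out $d=1$, forcing $\qdepth(h_2)=0$.

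For parts (2) and (4) I would exploit that $\Delta=0$ turns $h_2$ into a perfect square $h_2(j)=a\left(j+\tfrac{b}{2a}\right)^2$. In part (2), $b^2=4ae$ forces $4\mid b^2$, hence $b$ is even; setting $k=\tfrac{|b|}{2}\in\mathbb Z$ gives $k^2=ae$ and $b=\pm 2k$. The bound $k\geq 2e$ then follows in the pertinent branch $b=-2k$ (the one in which the double root $-\tfrac{b}{2a}>0$ is relevant): here $a+b\geq 0$ reads $a\geq 2k$, and $ae=k^2$ yields $e=\tfrac{k^2}{a}\leq\tfrac{k}{2}$, i.e.\ $k\geq 2e$. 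In part (4), nonnegativity is automatic from the perfect-square form since $a>0$, and $h_2(j)=0$ exactly when $j=-\tfrac{b}{2a}$; since $a+b<0$ forces $-\tfrac{b}{2a}>\tfrac12>0$, this vanishing can occur at a nonnegative integer only if $-\tfrac{b}{2a}\in\mathbb Z$.

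Part (5) is the main step, and the one I expect to be the real obstacle. Here $\Delta>0$ gives two real roots $j_{\pm}=\tfrac{-b\pm\sqrt\Delta}{2a}$, and I would first record that the product $j_-j_+=\tfrac{e}{a}>0$ together with the sum $j_-+j_+=-\tfrac{b}{a}>0$ (using $b<-a<0$) shows both roots are positive, so $0<j_-<j_+$. Since $a>0$, the factorization $h_2(j)=a(j-j_-)(j-j_+)$ gives $h_2(j)<0$ precisely on the open interval $(j_-,j_+)$, so $h_2(j)\geq 0$ for every integer $j\geq 0$ is equivalent to the statement that $(j_-,j_+)$ contains no integer. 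The crux is converting this into the geometric criterion of the lemma: with $\ell=\lfloor j_-\rfloor$ one has $\ell\leq j_-<\ell+1$, and I would show $(j_-,j_+)$ is integer-free if and only if $j_+\leq\ell+1$, i.e.\ $I=[j_-,j_+]\subseteq[\ell,\ell+1]$---the forward direction by contradiction (if $j_+>\ell+1$ then the integer $\ell+1$ lies in $(j_-,j_+)$), the reverse being clear. Finally, the containment $I\subseteq[\ell,\ell+1]$ forces the length $j_+-j_-=\tfrac{\sqrt\Delta}{a}\leq 1$, which is exactly $\Delta\leq a^2$; conversely $\Delta\leq a^2$ is what makes the unit-interval containment attainable. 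The delicate points to control are the strictness at the endpoints (the roots themselves yield $h_2=0\geq 0$, so the open interval is the correct object) and consistent bookkeeping of the floor so that $\ell$ is the unique candidate interval.
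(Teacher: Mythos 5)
Your proposal is correct and follows essentially the same route as the paper: direct evaluation of $\beta_0^1$ and $\beta_1^1$ for parts (1) and (3), the perfect-square factorization for (2) and (4), and the root-interval analysis with $\ell=\lfloor j_-\rfloor$ for (5); your treatment of (2) and (5) is in fact somewhat more careful than the paper's (you correctly isolate the branch $b=-2k$ in (2), and you argue both directions of the equivalence in (5)). The only point to tighten is part (3): ruling out $d=1$ does not by itself force $\qdepth(h_2)=0$, since the defining condition is not obviously monotone in $d$; you should either note that $\beta_1^d(h_2)=h_2(1)-d\,h_2(0)\leq a+b<0$ for every $d\geq 1$, or invoke the bound $\qdepth(h_2)\leq c(h_2)=\lfloor h_2(1)/h_2(0)\rfloor=0$ as the paper does.
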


\begin{proof}
(1) If $b\geq 0$ then the conclusion is clear. Assume that $b<0$ and $a+b\geq 0$.
    If $\Delta \geq 0$ then $h_2(x)\leq 0$ if and only if 
		$$x\in I:=\left[ \frac{-b-\sqrt{\Delta}}{2a},\frac{-b+\sqrt{\Delta}}{2a} \right].$$
		Since $\frac{-b}{2a}\leq \frac{1}{2}$ and $-b>\sqrt{\Delta}$ it follows that $I\subset (0,1)$.
		Hence, $h_2(j)>0$ for all $j\geq 0$. Also
		$$\beta_0^1(h_2)=h_2(0)>0,\;\beta_1^1(h_2)=h_2(1)-h_2(0)\geq 0,$$
		and thus $\qdepth(h_2)\geq 1$.
		
(2) From hypothesis, we have $b=-2\sqrt{ae}$. In particular, $ae=k^2$,
with $k>0$ and $b=\pm 2k$. On the other hand, since $a+b\geq 0$ it follows that $k\geq 2e$.

(3) Since $a+b<0$ it follows that $0<h_2(1)<h_2(0)$ and therefore, by \eqref{ch}, we have $c(h_2)=0$.
    Thus $\qdepth(h_2)=0$.
				
(4) If $\Delta=0$ then $h_2(x)=0$ if and only if $x=-\frac{b}{2a}$. Hence, we get the required conclusion.

(5) The condition $h_2(j)\geq 0$ for all $j\geq 0$ is fulfilled if and only if
    $$\mathring I \cap \mathbb Z = \emptyset,$$
		which implies that $I$ is a subset of an interval of the form $[\ell,\ell+1]$ with $\ell\geq 0$.
		Also, the length of $I$ is lower or equal to $\frac{1}{2}$ and thus $\sqrt{\Delta}\leq a$.
		
		(We denoted by $\mathring I$ the interior of $I$.)
\end{proof}


\begin{lema}\label{l2}
If $\alpha+\beta < 2$ then $\qdepth(h_2)=c(h_2)$. 
Also, if $\alpha+\beta\geq 2$ then $\qdepth(h_2)\geq 2$.
\end{lema}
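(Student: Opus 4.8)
The plan is to prove a matching lower bound for the first assertion and to reduce the second to a single computation at $d=2$. By \eqref{ch} we always have $\qdepth(h_2)\le c(h_2)=\lfloor\alpha+\beta\rfloor+1$, so for the first statement it suffices to establish $\qdepth(h_2)\ge c(h_2)$ under the hypothesis $\alpha+\beta<2$. First I would observe that $h_2(1)=a+b+e\ge 0$ forces $\alpha+\beta\ge -1$; combined with $\alpha+\beta<2$ this pins the integer $c(h_2)$ to the set $\{0,1,2\}$, and I would split the argument along these three cases.

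The two extreme cases follow from the preceding lemma. When $c(h_2)=0$ we have $\lfloor\alpha+\beta\rfloor=-1$, i.e.\ $a+b<0$, and Lemma~\ref{l1}(3) gives $\qdepth(h_2)=0=c(h_2)$. When $c(h_2)=1$ we have $0\le a+b<e$; here $a+b\ge 0$, so Lemma~\ref{l1}(1) yields $\qdepth(h_2)\ge 1$, which together with the upper bound forces equality.

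The heart of the matter, covering both the remaining case $c(h_2)=2$ of the first statement and the whole of the second statement, is to show $\qdepth(h_2)\ge 2$, i.e.\ $\beta_k^2(h_2)\ge 0$ for $k=0,1,2$. I would compute these three alternating sums directly: $\beta_0^2(h_2)=h_2(0)=e>0$, then $\beta_1^2(h_2)=h_2(1)-2h_2(0)=a+b-e$, and finally $\beta_2^2(h_2)=h_2(0)-h_2(1)+h_2(2)=3a+b+e=2a+h_2(1)$. The last quantity is automatically positive since $a>0$ and $h_2(1)\ge 0$, while $\beta_0^2(h_2)$ is trivially positive; hence the single decisive inequality is $\beta_1^2(h_2)=a+b-e\ge 0$, equivalently $\alpha+\beta\ge 1$. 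This holds in the case $c(h_2)=2$ (where $1\le\alpha+\beta<2$) and a fortiori when $\alpha+\beta\ge 2$, completing both claims.

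I do not expect a genuine obstacle here: once the three quantities $\beta_0^2,\beta_1^2,\beta_2^2$ are written out, the argument is elementary. The only points requiring care are the sign and binomial bookkeeping in evaluating $\beta_k^2(h_2)$, and the twofold use of $h_2(1)\ge 0$ — first to localize $c(h_2)$ in $\{0,1,2\}$, and second to guarantee $\beta_2^2(h_2)>0$. The entire difficulty of the statement is thereby concentrated in the clean inequality $a+b-e\ge 0$.
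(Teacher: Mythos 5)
Your proposal is correct and follows essentially the same route as the paper: bound $\alpha+\beta\ge -1$ via $h_2(1)\ge 0$, dispatch the cases $c(h_2)=0$ and $c(h_2)=1$ with Lemma \ref{l1}, and verify $\beta_0^2,\beta_1^2,\beta_2^2\ge 0$ directly when $\alpha+\beta\ge 1$. The only cosmetic difference is that you bound $\beta_2^2(h_2)=2a+h_2(1)>0$ where the paper uses $3a+b+e\ge 2a+2e$; both are immediate.
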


\begin{proof}
Note that $h_2(1)\geq 0$ implies $a+b\geq -e$, and thus $\alpha+\beta\geq -1$.
If $a+b<0$ then $\alpha+\beta \in [-1,0)$ and thus, From Lemma \ref{l1} we have 
$\qdepth(h_2)=c(h_2)=0.$

If $a+b\geq 0$ then, again from Lemma \ref{l1}, we have that 
$c(h_2)\geq \qdepth(h_2)\geq 1.$
Hence, if $\alpha+\beta\in [0,1)$ then we have equalities above. 

Now, assume $\alpha+\beta \geq 1$, that is $a+b\geq e$. Then 
\begin{align*}
& \beta_0^2(h_2)=h_2(0)=e>0,\;\beta_1^2(h_2) = h_2(1)-2h_2(0) = a+b-e\geq 0\text{ and}\\
& \beta_2^2(h_2)=h_2(2)-h_2(1)+h_2(0)=3a+b+e\geq 2a+2e > 0.
\end{align*}
	and thus $\qdepth(h_2)\geq 2$. If $\alpha+\beta <2$ then $c(h_2) = 2$ and it
	follows that $\qdepth(h_2)=2$.
\end{proof}

We consider the quadratic function
\begin{equation}\label{fdx}
f(x):=\frac{1}{e}\beta_2^x(h_2)=\frac{1}{2}x^2-(\alpha+\beta+\frac{3}{2})x+(5\alpha+3\beta+2).
\end{equation}
Note that $f(x)$ has the discriminant $\Delta(\alpha,\beta):=(\alpha+\beta)^2-7\alpha-3\beta-\frac{7}{4}$
and, moreover, the solutions of the equation $f(x)=0$ are 
\begin{equation}\label{x12}
x_{1,2}=\alpha+\beta+\frac{3}{2} \pm \sqrt{\Delta(\alpha,\beta)}.
\end{equation}
Therefore, $f(d)<0$ if and only if $d\in (x_1,x_2)$.

\begin{lema}\label{l13}
Assume that $\alpha+\beta\geq 2$. If there exists an integer $3\leq d\leq c(h_2)+1$ 
such that $f(d)<0$ then $\qdepth(h_2)<d$.
\end{lema}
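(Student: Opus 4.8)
The plan is to argue by contradiction, exploiting the fact that the single inequality $\beta_2^d(h_2)\geq 0$ is one of the defining constraints of $\qdepth(h_2)$ as soon as $\qdepth(h_2)\geq d$. Since $f(x)=\frac{1}{e}\beta_2^x(h_2)$ and $e>0$, the hypothesis $f(d)<0$ is exactly the statement $\beta_2^d(h_2)<0$. Thus it suffices to show that $\qdepth(h_2)\geq d$ would force $\beta_2^d(h_2)\geq 0$, which is a contradiction.

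The key tool I would establish first is the \emph{downward closure} of the defining inequalities: if $\beta_k^d(h)\geq 0$ for all $0\leq k\leq d$, then $\beta_k^{d-1}(h)\geq 0$ for all $0\leq k\leq d-1$. This follows from a Pascal-type recursion. Substituting $\binom{d-j}{k-j}=\binom{d-1-j}{k-j}+\binom{d-1-j}{k-1-j}$ into the definition of $\beta_k^d(h)$, and using that the second sum reindexes after the vanishing $j=k$ boundary term, one obtains
\[
\beta_k^d(h)=\beta_k^{d-1}(h)-\beta_{k-1}^{d-1}(h),\qquad\text{hence}\qquad \beta_k^{d-1}(h)=\beta_k^d(h)+\beta_{k-1}^{d-1}(h).
\]
Since $\beta_0^{d-1}(h)=h(0)>0$, an induction on $k$ then gives $\beta_k^{d-1}(h)\geq 0$ for every $0\leq k\leq d-1$, because each summand on the right is nonnegative. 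Iterating this step, $\qdepth(h_2)\geq d$ implies that the full system $\beta_k^d(h_2)\geq 0$ holds for all $0\leq k\leq d$.

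With this in hand the lemma follows quickly. Suppose, for contradiction, that $\qdepth(h_2)\geq d$. Applying downward closure from the level $\qdepth(h_2)$ down to the level $d$, and using that $d\geq 3>2$ so that $\beta_2^d(h_2)$ is indeed one of the constraints, we obtain $\beta_2^d(h_2)\geq 0$, i.e. $f(d)\geq 0$, contradicting $f(d)<0$. Therefore $\qdepth(h_2)<d$. The hypotheses also fit together cleanly: by Lemma \ref{l2} the assumption $\alpha+\beta\geq 2$ gives $\qdepth(h_2)\geq 2$ and, via \eqref{ch}, $c(h_2)=\lfloor\alpha+\beta\rfloor+1\geq 3$, so the range $3\leq d\leq c(h_2)+1$ is nonempty; moreover the extreme case $d=c(h_2)+1$ is automatic, since \eqref{ch} already yields $\qdepth(h_2)\leq c(h_2)<d$.

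The main obstacle I anticipate is not in the closing contradiction but in justifying the downward-closure step carefully, in particular checking the binomial boundary cases (the vanishing of $\binom{d-1-k}{-1}$ and the index shift producing the sign in the second sum) so that the recursion $\beta_k^d(h)=\beta_k^{d-1}(h)-\beta_{k-1}^{d-1}(h)$ is valid for all relevant $k$. Once that identity is secured the rest is a short induction; and since this monotonicity is likely already recorded in \cite{lucrare7}, the proof may reduce to the one-line contradiction above.
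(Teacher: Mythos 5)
Your proposal is correct and follows essentially the same route as the paper: both reduce the claim to the identity $\beta_2^d(h_2)=ef(d)$ (which is just the definition \eqref{fdx}) and then conclude from $\beta_2^d(h_2)<0$ that $\qdepth(h_2)<d$. The only difference is that you explicitly verify the downward-closure property $\beta_k^{d-1}(h)=\beta_k^d(h)+\beta_{k-1}^{d-1}(h)$ needed to pass from "the constraint fails at level $d$" to "$\qdepth(h_2)<d$", a step the paper leaves implicit (it is standard and recorded in the cited reference), and your Pascal-recursion argument for it is valid.
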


\begin{proof}
We have that 
\begin{align*}
& \beta_2^d(h_2)=\binom{d}{2}h_2(0)-\binom{d-1}{1}h_2(1)+\binom{d-2}{0}h_2(2) = \frac{d(d-1)}{2}e - (d-1)(a+b+e)+\\
& + (4a+2b+e)  = \frac{e}{2}d^2 - (a+b+\frac{3}{2}e)d + (5a+3b+2e).
\end{align*}
Since, by \eqref{fdx}, $\beta_2^d(h_2)=ef(d)$, we get the required conclusion.
\end{proof}


\begin{obs}\rm
Note that the equation $F(x,y)=(x+y)^2-7x-3y-\frac{7}{4}$ defines a parabola $\mathcal P\subset \mathbb R^2$ with the symmetry axis $x+y=\frac{5}{2}$,
the vertex $\left(-\frac{3}{4},\frac{13}{4}\right)$ and the focal point $\left(-\frac{1}{2},3\right)$. 
Moreover, the set $D=\{(x,y)\in\mathbb R^2\;:\;F(x,y)<0\}$ is the open connected subset of $\mathbb R^2$ bounded by $\mathcal P$
which contains the focal point of $\mathcal P$.
(We left the details for the reader, as an easy exercise.)

We consider the parabola $\mathcal P_1:G(x,y)=y^2-4x=0$ and the domain
$$D_1=\{(x,y)\in\mathbb R^2\;:\;G(x,y)> 0\}.$$
Note that $\Delta\geq 0$ if and only if $(\alpha,\beta)\in \overline{D_1}$.
We let $K:=\overline{D}\cap \overline{D_1}$. 

We claim that $K$ is compact. Indeed, this follows from the fact that
the intersection of $\mathcal P$ with $\mathcal P_1$ consist in 4 points:
$(\alpha_1,\beta_1)\approx (0.19,-0.87)$, $(\alpha_2,\beta_2)\approx (0.39,-1.26)$, $(\alpha_3,\beta_3)\approx (2.12,2.91)$ and 
$(\alpha_4,\beta_4)\approx(19.29,-8.78).$

Moreover, since the symmetry axis of $\mathcal P$ has the equation $x+y=\frac{5}{2}$, it follows that
$$\max_{(\alpha,\beta)\in K}(\alpha+\beta)=\alpha_4+\beta_4 \approx 10.51.$$

\begin{figure}[h]
	\centering	
\includegraphics[scale=0.17]{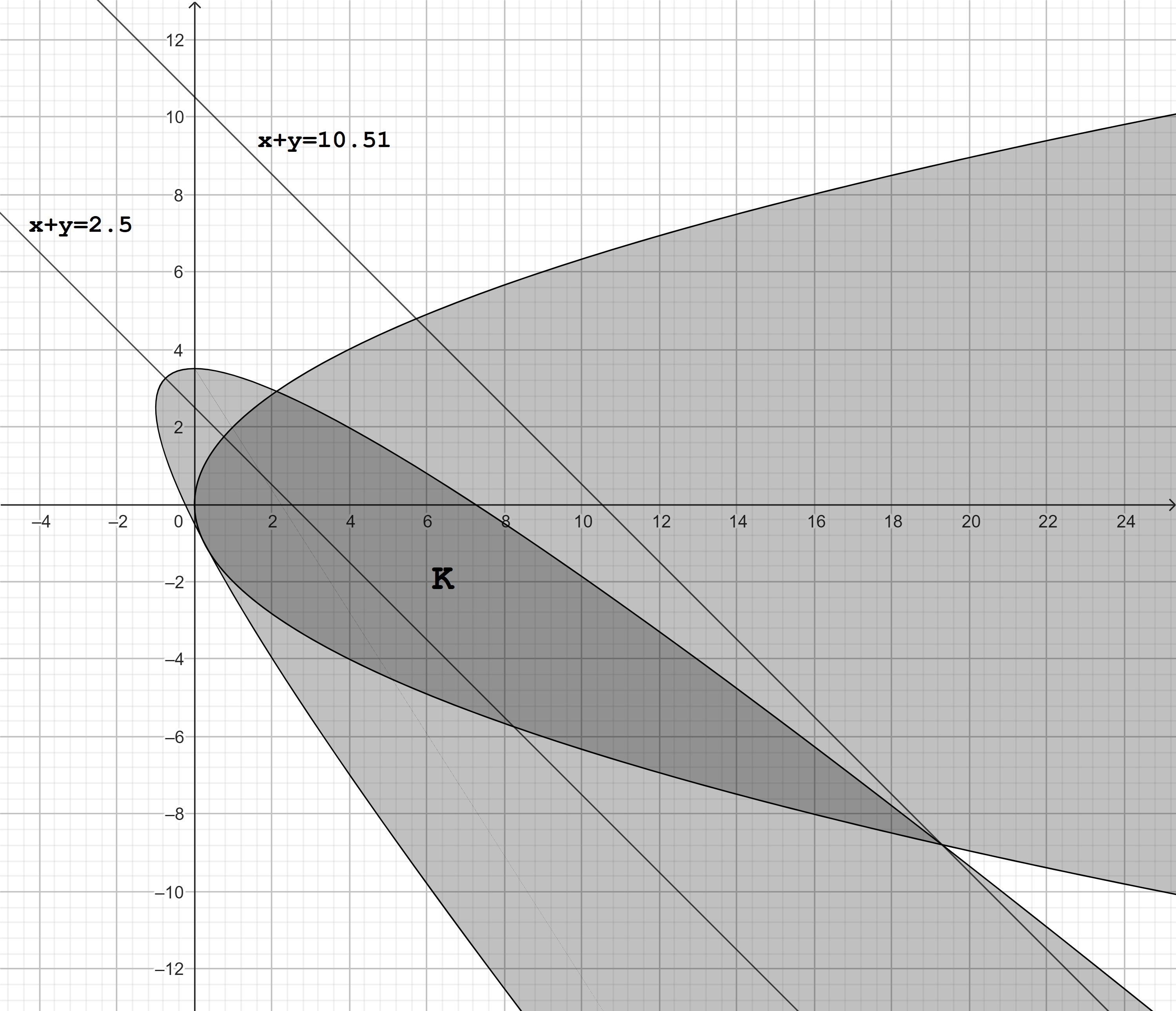}
\caption{The domain $K$.}
\end{figure}
\end{obs}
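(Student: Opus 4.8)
The plan is to establish the three geometric assertions in turn. First I would confirm the stated data for $\mathcal P$ by diagonalizing the quadratic part of $F$. The form $(x+y)^2$ has eigenvalue $2$ in the direction $(1,1)$ and eigenvalue $0$ in the direction $(1,-1)$, so passing to the orthonormal coordinates $s=\frac{1}{\sqrt 2}(x+y)$, $t=\frac{1}{\sqrt 2}(x-y)$ turns $F=0$ into a genuine parabola opening in the $+t$ direction; completing the square gives vertex $(s_0,t_0)=(\frac{5\sqrt 2}{4},-2\sqrt 2)$ and focal length $p=\frac{\sqrt 2}{4}$. Translating back to $(x,y)$ recovers the symmetry axis $x+y=\frac{5}{2}$, the vertex $(-\frac{3}{4},\frac{13}{4})$ and the focus $(-\frac{1}{2},3)$, while the single evaluation $F(-\frac{1}{2},3)=-1<0$ identifies $D$ as the interior (convex) side. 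The equivalence $\Delta\ge 0\Leftrightarrow(\alpha,\beta)\in\overline{D_1}$ is then immediate from $G(\alpha,\beta)=\beta^2-4\alpha=\Delta/e^2$.

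Next I would determine $\mathcal P\cap\mathcal P_1$ by elimination: substituting $\alpha=\frac{1}{4}\beta^2$ from $G=0$ into $F=0$ and clearing denominators yields the quartic $\beta^4+8\beta^3-12\beta^2-48\beta-28=0$. I would check that it has exactly four real roots (for instance via a Sturm sequence, or by locating four sign changes of the polynomial), which produces the four listed points and certifies that the two parabolas meet transversally in exactly four points. These are the only candidates for the corners of the region bounded by arcs of $\mathcal P$ and $\mathcal P_1$.

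For compactness I would argue with recession cones rather than with the arcs directly. Both $\{F\le 0\}$ and the interior $\{G\le 0\}$ of $\mathcal P_1$ are closed and convex, being interiors of parabolas, and the recession cone of the interior of a parabola is the single ray along its axis pointing into the opening, namely $\mathbb R_{\ge 0}\cdot(1,-1)$ for $\mathcal P$ and $\mathbb R_{\ge 0}\cdot(1,0)$ for $\mathcal P_1$. Since these rays meet only at the origin, the intersection has trivial recession cone, hence is bounded; being also closed, it is compact. The linear functional $\alpha+\beta$ then attains its maximum at an extreme point. A Lagrange computation shows the level lines $\alpha+\beta=\mathrm{const}$, which are parallel to the axis of $\mathcal P$, are never tangent to $\mathcal P$ and touch $\mathcal P_1$ only at $(1,-2)$, where $\alpha+\beta=-1$ is \emph{minimal}; so the maximum occurs at one of the four corners, and comparing the four values gives $\max_{(\alpha,\beta)\in K}(\alpha+\beta)=\alpha_4+\beta_4\approx 10.51$.

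The step I expect to be the main obstacle is precisely the boundedness of $K$, where one must be careful which side of $\mathcal P_1$ is taken: the interior of $\mathcal P$ is itself unbounded along its axis $(1,-1)$, so it is only the second parabola that cuts it down to a compact lens, and the four intersection points by themselves do not force boundedness. The recession-cone comparison (or, equivalently, a direct check that no line $x+y=c$ with $c>\alpha_4+\beta_4$ meets the region) is what makes this rigorous. Once compactness is secured, the reduction to the four corners is exactly the content of the symmetry-axis observation, and the bound $\qdepth(h_2)\le\lfloor\alpha+\beta\rfloor+1\le 11$ then follows from \eqref{ch}.
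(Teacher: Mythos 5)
Your outline is a sound rigorization of what the paper merely asserts: the remark's own justification is essentially the figure plus the list of four intersection points, and you are right that four intersection points by themselves do not force boundedness. Your vertex/focus computation in the rotated coordinates, the elimination $\alpha=\tfrac14\beta^2$ leading to the quartic $\beta^4+8\beta^3-12\beta^2-48\beta-28=0$ with four real roots, and the tangency argument reducing $\max(\alpha+\beta)$ to the four corners all check out numerically.

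There is, however, one genuine problem, and it is inherited from the statement itself: you silently replace $\overline{D_1}$ by $\{G\le 0\}$. As the remark is literally written, $D_1=\{G>0\}$ is the \emph{exterior} of $\mathcal P_1$, so $\overline{D_1}=\{y^2\ge 4x\}$ and $K=\overline D\cap\{y^2\ge 4x\}$. That set is \emph{not} compact: for every $t\ge 4$ the point $(t,-t)$ satisfies $F(t,-t)=-4t-\tfrac74<0$ and $G(t,-t)=t(t-4)\ge 0$, so the whole ray $\{(t,-t):t\ge 4\}$ lies in $K$; moreover the points of $\mathcal P$ with $x+y=u$ large satisfy $G\sim u^4/16>0$, so on the literal $K$ one even has $\sup(\alpha+\beta)=+\infty$ and the displayed maximum is false. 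Your recession-cone argument is valid precisely for the \emph{other} set, $\overline D\cap\{G\le 0\}$ (the convex interior of $\mathcal P_1$, i.e.\ the locus $\Delta\le 0$), where both factors are closed convex with recession cones $\mathbb R_{\ge 0}(1,-1)$ and $\mathbb R_{\ge 0}(1,0)$ meeting only in $\{0\}$; applied to $\{G\ge 0\}$ it would collapse, since that set admits $(1,-1)$ as a recession direction. Because Lemma \ref{cheie} and Theorem \ref{teo1}(2) concern exactly the case $\Delta\le 0$, the set you analyzed is the one the paper actually needs, but you must state explicitly that you are proving the corrected claim with $K:=\overline D\cap\{G\le 0\}=\overline D\setminus D_1$ rather than the claim as printed. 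With that substitution made explicit, your argument is complete and is strictly more rigorous than the paper's appeal to the picture.
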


\begin{lema}\label{cheie}
If $\alpha+\beta\geq 11$ and $\Delta\leq 0$ then $\Delta(\alpha,\beta)>\frac{1}{4}$.
\end{lema}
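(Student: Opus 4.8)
The plan is to collapse the two variables into one by setting $s:=\alpha+\beta$ and keeping $\alpha$ as the free parameter. First I would rewrite the discriminant of $f$ using the identity $-7\alpha-3\beta=-3(\alpha+\beta)-4\alpha$, obtaining the clean form
\[
\Delta(\alpha,\beta)=s^2-3s-\frac{7}{4}-4\alpha .
\]
The virtue of this expression is that, with $s$ fixed, $\Delta(\alpha,\beta)$ is strictly \emph{decreasing} in $\alpha$; hence it is enough to bound it below at the largest admissible value of $\alpha$.

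Next I would convert the hypothesis $\Delta\leq 0$ into a constraint on $\alpha$. Dividing $\Delta=b^2-4ae$ by $e^2$ gives $\beta^2-4\alpha\leq 0$, and substituting $\beta=s-\alpha$ turns this into $(s-\alpha)^2\leq 4\alpha$, i.e. $\alpha^2-(2s+4)\alpha+s^2\leq 0$. Since this quadratic in $\alpha$ opens upward with discriminant $16(s+1)$, the admissible values of $\alpha$ form the interval with endpoints $(s+2)\pm 2\sqrt{s+1}$; in particular
\[
\alpha\leq (s+2)+2\sqrt{s+1}.
\]
(At this upper endpoint one has $\beta=-2-2\sqrt{s+1}<0$, consistent with the regime $b<0$ in which the lemma will be applied.) Substituting this bound into the formula for $\Delta(\alpha,\beta)$ reduces the whole lemma to the one-variable inequality
\[
s^2-7s-\frac{39}{4}-8\sqrt{s+1}>\frac{1}{4}\qquad(s\geq 11),
\]
equivalently $s^2-7s-10>8\sqrt{s+1}$.

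Finally I would settle this last inequality. For $s\geq 11$ the left-hand side $s^2-7s-10$ is positive (its positive root $\tfrac{7+\sqrt{89}}{2}\approx 8.2$ lies below $11$), so squaring is legitimate and the inequality becomes the quartic
\[
s^4-14s^3+29s^2+76s+36>0 .
\]
One checks that this quartic is increasing on $[11,\infty)$ and equals $388$ at $s=11$, so it is positive throughout; alternatively one writes it as $s^3(s-14)+29s^2+76s+36$, which is manifestly positive for $s\geq 14$, and handles the short range $11\leq s\leq 14$ by a direct estimate. I expect the only genuine care needed to be in this last step---verifying positivity of $s^2-7s-10$ so that squaring preserves the inequality, and confirming that the quartic does not dip below zero on the transitional interval where $s^3(s-14)$ is still negative. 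Everything else is a routine change of variables followed by a one-parameter optimization over $\alpha$.
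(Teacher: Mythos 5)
Your proof is correct and follows essentially the same strategy as the paper: rewrite $\Delta(\alpha,\beta)$ in terms of $s=\alpha+\beta$ and one remaining variable, use the hypothesis $\Delta\le 0$ (i.e.\ $\beta^2\le 4\alpha$) to bound that variable, and reduce everything to a one-variable inequality in $s\ge 11$. The only difference is in execution: the paper keeps $\beta$ as the free parameter and replaces the exact bound $\beta\ge -2-2\sqrt{1+t}$ by the cruder $\beta>-2-\tfrac{2t}{3}$ so as to finish with a simple quadratic in $t$, whereas you keep $\alpha$, carry the exact radical bound $\alpha\le (s+2)+2\sqrt{s+1}$ through, and settle the resulting inequality by squaring to a quartic --- both computations check out (in particular your value $388$ at $s=11$ and the monotonicity of the quartic on $[11,\infty)$ are correct).
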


\begin{proof}
Let $t:=\alpha+\beta$. We have that
\begin{equation}\label{tdab}
\Delta(\alpha,\beta) = (\alpha+\beta)^2 - 7\alpha-3\beta-\frac{7}{4} = t^2 -7t + 4\beta - \frac{7}{4}.
\end{equation}
On the other hand, we have that
$$\frac{\Delta}{e^2}=\beta^2 - 4\alpha=\beta^2 + 4\beta - 4t < 0,$$
therefore $\beta \in [-2-2\sqrt{1+t},-2+2\sqrt{1+t}]$. Since $\sqrt{1+t}<\frac{t}{3}$ for $t\geq 11$,
it follows that 
\begin{equation}\label{betaaa}
\beta \in \left(-2-\frac{2t}{3},-2+\frac{2t}{3}\right),\text{so }\beta>-2-\frac{2t}{3}.
\end{equation}
From \eqref{tdab} and \eqref{betaaa} it follows that
$$\Delta(\alpha,\beta) \geq t^2-7t+4(-2-\frac{2t}{3})-\frac{7}{4} = t^2-\frac{29}{3}t-\frac{39}{4}$$
Since $t\geq 11$, we obtain
$$\Delta(\alpha,\beta)-\frac{1}{4} = t^2-\frac{29}{3}t-10 = t(t-\frac{29}{3})-10\geq \frac{44}{3}-10 > 0,$$
as required.
\end{proof}

\begin{lema}\label{cheie2}
If $\Delta(\alpha,\beta)>\frac{1}{4}$ then $\qdepth(h_2)\leq 11$. 
\end{lema}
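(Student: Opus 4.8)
The plan is to reduce the statement to an application of Lemma \ref{l13}. First I would dispose of the trivial range: by \eqref{ch} one has $\qdepth(h_2)\le c(h_2)=\lfloor \alpha+\beta\rfloor+1$, so if $t:=\alpha+\beta<11$ then $\qdepth(h_2)\le 11$ already. Thus I may assume $t\ge 11$, which in particular gives $t\ge 2$, the hypothesis needed to invoke Lemma \ref{l13}. The hypothesis $\Delta(\alpha,\beta)>\frac14$ makes the two roots $x_{1,2}$ of \eqref{x12} real and forces $x_2-x_1=2\sqrt{\Delta(\alpha,\beta)}>1$, so the open interval $(x_1,x_2)$ on which $f<0$ must contain an integer. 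The whole game is then to produce such an integer $d$ that is simultaneously $\le 12$ and inside the admissible window $[3,c(h_2)+1]$; Lemma \ref{l13} will then yield $\qdepth(h_2)<d\le 12$, that is $\qdepth(h_2)\le 11$.

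The easy half is controlling the right endpoint. From \eqref{x12} and $\sqrt{\Delta(\alpha,\beta)}>\frac12$ I get
$$x_2=\left(\alpha+\beta+\tfrac32\right)+\sqrt{\Delta(\alpha,\beta)}>t+2\ge \lfloor t\rfloor+2=c(h_2)+1,$$
so every integer $d$ with $d\le c(h_2)+1$ and $d>x_1$ automatically lies in $(x_1,x_2)$. Consequently it suffices to prove the single inequality $x_1<12$: then $d:=\max\{3,\lfloor x_1\rfloor+1\}$ satisfies $x_1<d\le 12\le c(h_2)+1$ and $d\ge 3$, hence $f(d)<0$, and Lemma \ref{l13} finishes the proof.

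The crux, and the step I expect to be the main obstacle, is exactly the bound $x_1<12$. Since $t\ge 11>\frac{21}{2}$, this is equivalent to $\sqrt{\Delta(\alpha,\beta)}>t-\frac{21}{2}$, i.e. to $\Delta(\alpha,\beta)>\left(t-\frac{21}{2}\right)^2$, which after substituting \eqref{tdab} collapses to the lower bound $\beta>28-\frac{7t}{2}$. So the real content is a lower estimate for $\beta$. When $\Delta=b^2-4ae\le 0$ --- the regime in which this lemma is meant to be applied through Lemma \ref{cheie} --- the inequality $\beta^2+4\beta-4t\le 0$ gives $\beta\ge -2-2\sqrt{1+t}$, and for $t\ge 11$ one checks $-2-2\sqrt{1+t}>28-\frac{7t}{2}$, so $x_1<12$ holds and the argument closes. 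The genuinely delicate situation is $\Delta>0$ with $\beta$ very negative (and $\alpha$ correspondingly large): there the minimum of $h_2$ falls strictly between $0$ and $1$, so the constraint $h_2\ge 0$ imposes no useful lower bound on $\beta$, $x_1$ may exceed $12$, and $f$ alone is too lossy. In that case I would abandon $\beta_2^d=ef$ and instead extract a negative value from a higher coefficient $\beta_k^d(h_2)$ with $k\ge 3$ at a small $d$, which for such steeply varying $h_2$ turns negative well before $d$ reaches $12$.
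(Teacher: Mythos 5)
Your strategy is the same as the paper's: after disposing of $t:=\alpha+\beta<11$ via \eqref{ch}, invoke Lemma \ref{l13} with an integer $d\le 12$, the inequalities $x_2>t+2\ge c(h_2)+1$ and $x_2-x_1>1$ coming from $\sqrt{\Delta(\alpha,\beta)}>\frac12$, so that everything reduces to $x_1<12$. Your observation that $x_1<12$ is (for $t\ge 11$) equivalent to $\beta>28-\frac72t$, and your verification of this under the extra hypothesis $b^2\le 4ae$, is exactly what is needed where the lemma is actually used: it is only ever invoked immediately after Lemma \ref{cheie}, which supplies $b^2\le 4ae$ and $t\ge 11$. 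In that regime your argument is complete and, if anything, more transparent than the paper's.

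The ``genuinely delicate situation'' you flag is real, but it is a gap in the lemma's own proof as printed, not a defect peculiar to your attempt. The paper's proof rests on the claims $\varphi(11)<12$ and $\varphi'(x)<0$ on $(11,\infty)$ in \eqref{kukk2}; both depend on $\beta$ (one checks $\varphi(11)<12$ is equivalent to $\beta>-\frac{21}{2}$, and $\varphi(11)$ is not even real for $\beta<-\frac{169}{16}$), so neither follows from the hypothesis $\Delta(\alpha,\beta)>\frac14$ alone. Concretely, for $h_2(j)=900j^2-700j+1\in\mathcal H_0$ one has $\alpha=900$, $\beta=-700$, $t=200$, $\Delta(\alpha,\beta)=35798.25>\frac14$, yet $f(12)=56-7\alpha-9\beta=56>0$, so $x_1>12$ and the asserted chain $\varphi(t)<12<\psi(t)$ fails. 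Thus both your proof and the paper's really establish the lemma only under the additional assumption $b^2\le 4ae$; the complementary regime $b^2>4ae$ is treated separately by the paper in Theorem \ref{teo11} (with the weaker bound $13$), and the device used there --- forcing $\beta_3^{14}(h_2)<0$ --- is precisely the ``higher $\beta_k^d$'' escape you sketch at the end but do not carry out. So your instinct about how to close the remaining case is the right one; as a standalone proof of the statement as literally written, your argument (like the paper's) is incomplete in that case.
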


\begin{proof}
We consider the function 
$$\varphi(x)=x+\frac{3}{2}-\sqrt{x^2-7x+4\beta-\frac{7}{4}},\;x\in [11,\infty).$$
Let $t:=\alpha+\beta$. Since $\Delta>\frac{1}{4}$, from \eqref{tdab} it follows that
\begin{equation}\label{kukk1}
\varphi(t)=\alpha+\beta+\frac{3}{2}-\sqrt{\Delta(\alpha,\beta)} <\alpha +\beta + 1.
\end{equation}
On the other hand, by straightforward computations, we have that
\begin{equation}\label{kukk2}
\varphi(11)<12,\;\lim_{x\to\infty}\varphi(x)=5\text{ and }\varphi'(x)<0\text{ for all }x\in (11,\infty).
\end{equation}
On the other hand, from \eqref{11} we have that
$$\qdepth(h_2)\leq c(h_2) = \lfloor t \rfloor + 1 \leq t + 1.$$
Therefore, from \eqref{kukk1} and \eqref{kukk2} we get
$$\varphi(t)<d:=12\leq c(h_2)+1\leq t+2<\psi(t)=t+\frac{3}{2} + \sqrt{t^2-7t+4\beta-\frac{7}{4}}.$$ 
The conclusion follows from \eqref{fdx}, \eqref{x12} and Lemma \ref{l13}.
\end{proof}

\begin{teor}\label{teo1}
Let $h\in\mathcal H_0$, $h_2(j)=aj^2+bj+e$, $j\geq 0$.
\begin{enumerate}
\item[(1)] If $b\geq 0$ then $\qdepth(h_2)\leq 8$.
\item[(2)] If $b<0$ and $b^2\leq 4ae$, then $\qdepth(h_2)\leq 11$.
\end{enumerate}
\end{teor}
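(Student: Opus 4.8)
The plan is to dispatch part (1) in one line and to reduce part (2) to a dichotomy on the quantity $t:=\alpha+\beta$. For (1), the hypothesis $b\geq 0$ together with $a>0$ and $e>0$ means that $h_2$ has all its coefficients nonnegative, so it falls under the hypotheses of Theorem \ref{t1} with $n=2$; that theorem yields at once $\qdepth(h_2)\leq 2^{2+1}=8$, and nothing further is needed.

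For (2) I would first translate the hypotheses: $b<0$ and $b^2\leq 4ae$ say precisely that $\beta<0$ and $\Delta=b^2-4ae\leq 0$, so the inequality $\Delta\leq 0$ is in force throughout the argument. The key observation is that the crude bound \eqref{ch}, namely $\qdepth(h_2)\leq c(h_2)=\lfloor t\rfloor+1$, already settles every case with $t<11$: there $\lfloor t\rfloor\leq 10$, hence $c(h_2)\leq 11$ and a fortiori $\qdepth(h_2)\leq 11$. This bucket absorbs all the degenerate subcases (for instance $a+b<0$, where Lemma \ref{l1} gives $\qdepth(h_2)=0$, and $\alpha+\beta<2$, where Lemma \ref{l2} gives $\qdepth(h_2)=c(h_2)$), so no separate treatment of them is required.

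It remains to handle the regime $t\geq 11$, which is exactly where the naive estimate breaks down and the finer analysis must be used. Here the standing hypothesis $\Delta\leq 0$ lets me invoke Lemma \ref{cheie} to obtain $\Delta(\alpha,\beta)>\frac{1}{4}$; feeding this into Lemma \ref{cheie2} then produces $\qdepth(h_2)\leq 11$ directly. Combining the two regimes covers every admissible pair $(\alpha,\beta)$ in part (2), which completes the proof. The genuine content—and the step I expect to be delicate—is precisely the chain \ref{cheie}$\to$\ref{cheie2}: one must show that for $t\geq 11$ the integer $d=12$ lies strictly between the two roots $x_{1,2}$ of $f$ given in \eqref{x12}, so that $f(12)<0$ and Lemma \ref{l13} forces $\qdepth(h_2)<12$. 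This localization rests on the monotonicity and limiting behaviour of the auxiliary function $\varphi$ and on the elementary estimate $\sqrt{1+t}<t/3$ valid for $t\geq 11$; the threshold $11$ is chosen exactly so that the two regimes meet at the boundary $\lfloor t\rfloor=10$.
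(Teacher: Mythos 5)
Your proposal is correct and follows exactly the paper's route: part (1) is Theorem \ref{t1} with $n=2$, and part (2) is the combination of Lemma \ref{cheie} and Lemma \ref{cheie2}, with the explicit split into $t<11$ (handled by the crude bound $\qdepth(h_2)\leq c(h_2)=\lfloor t\rfloor+1$) and $t\geq 11$ (where both lemmas apply) being precisely the glue the paper leaves implicit. Nothing to add.
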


\begin{proof}
(1) It is a particular case of Theorem \ref{t1}.

(2) It follows from Lemma \ref{cheie} and Lemma \ref{cheie2}.
\end{proof}

\begin{obs}\label{remi}\rm
If we omit the condition $b^2\leq 4ae$ in Theorem \ref{teo1}(2), then
the sum $\alpha+\beta$ for which $\Delta(\alpha,\beta)<0$ can take values as large as we want.
For instance, if $k\geq 4$ is an integer and $h_2(j)= k^2j^2+ (k-k^2)j +1$, then we have $\alpha=k^2$ and $\beta=k-k^2$. Hence
$$\Delta=k^4-2k^3-3k^2>0\text{ and }\Delta(\alpha,\beta)=-4k^2-3k-\frac{7}{4}<0.$$
This shows that we cannot apply Lemma \ref{l13} in order to find an upper bound for $\qdepth(h_2)$, independent on coefficients.

However, if $k\geq 5$ then, by straightforward computations, we get
$$\beta_j^5(h_2)\geq 0\text{ for all }0\leq j\leq 5\text{ and }\beta_3^6(h_2)=-13k+5k-k^2<0.$$
Hence, $\qdepth(h_2)=5$. Thus, this is not a counterexample to Conjecture \ref{conj}. 
Moreover, this example gives a hint for what we need to do in the following.
\end{obs}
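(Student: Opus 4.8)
The plan is to verify the three numerical claims made for the family $h_2(j)=k^2j^2+(k-k^2)j+1$ and then to upgrade the single inequality $\beta_3^6(h_2)<0$ into the sharp statement $\qdepth(h_2)=5$.

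First I would record the invariants of the example. Since $a=k^2$, $b=k-k^2$, $e=1$, we have $\alpha=k^2$, $\beta=k-k^2$, and crucially $\alpha+\beta=k$. A direct substitution gives $\Delta=b^2-4ae=k^2(k-3)(k+1)$, which is positive for $k\geq 4$, so the hypothesis $b^2\leq 4ae$ of Theorem \ref{teo1}(2) indeed fails. Using \eqref{tdab} with $t=\alpha+\beta=k$ I would compute $\Delta(\alpha,\beta)=k^2-7k+4\beta-\frac74=-3k^2-3k-\frac74<0$. This already justifies the first assertion: because $\Delta(\alpha,\beta)<0$, the quadratic $f$ of \eqref{fdx} has no real zeros and positive leading coefficient, so $f(d)>0$ --- equivalently $\beta_2^d(h_2)>0$ --- for every $d$; hence Lemma \ref{l13} can never detect a bad $d$, and the coefficient-free strategy behind Theorem \ref{teo1} breaks down.

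Next I would establish the lower bound $\qdepth(h_2)\geq 5$ by evaluating $h_2$ at $j=0,\ldots,5$ and computing the six quantities $\beta_0^5(h_2),\ldots,\beta_5^5(h_2)$ directly from the definition. These come out as $1$, $k-4$, $2k^2-2k+7$, $3k-6$, $6k^2+3$, and $12k^2+3k$; each is $\geq 0$ once $k\geq 5$ (the binding one being $\beta_1^5(h_2)=k-4$), which is exactly why the hypothesis $k\geq 5$ is imposed. For the upper bound I would compute $\beta_3^6(h_2)=-2k^2+5k-13$, whose discriminant is negative, so $\beta_3^6(h_2)<0$ for all $k$.

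The one genuinely delicate point is that $\beta_3^6(h_2)<0$ only excludes the single value $d=6$, whereas the definition of $\qdepth$ as a maximum forces me to exclude every $d\geq 6$ at once; here I cannot fall back on Lemma \ref{l13} (by the first step) nor on the crude bound $\qdepth(h_2)\leq c(h_2)=k+1$ alone, since the latter still leaves the whole window $6\leq d\leq k+1$ to rule out. I would resolve this with the Pascal-type recurrence $\beta_k^{d+1}(h_2)=\beta_k^d(h_2)-\beta_{k-1}^d(h_2)$, obtained by splitting $\binom{d+1-j}{k-j}=\binom{d-j}{k-j}+\binom{d-j}{k-1-j}$ and reindexing. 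Taking $k=3$ gives $\beta_3^{d+1}(h_2)=\beta_3^d(h_2)-\beta_2^d(h_2)$, and since the first step showed $\beta_2^d(h_2)>0$ for every $d$, the sequence $d\mapsto\beta_3^d(h_2)$ is strictly decreasing. As $\beta_3^6(h_2)<0$, it follows that $\beta_3^d(h_2)<0$ for all $d\geq 6$, so no $d\geq 6$ satisfies the defining inequalities of $\qdepth$. Combined with $\qdepth(h_2)\geq 5$ this yields $\qdepth(h_2)=5$, confirming that the example is not a counterexample to Conjecture \ref{conj}.
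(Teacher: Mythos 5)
Your proposal is correct and, for the computational core, follows the paper's own route: the same direct evaluation of $\Delta$, of $\Delta(\alpha,\beta)$, of $\beta_j^5(h_2)$ for $0\leq j\leq 5$, and of $\beta_3^6(h_2)$. Two points of comparison are worth recording. First, your computations silently correct two typos in the paper's remark: the true values are $\Delta(\alpha,\beta)=-3k^2-3k-\frac{7}{4}$ (the paper prints $-4k^2-3k-\frac{7}{4}$; both are negative, so the conclusion is unaffected) and $\beta_3^6(h_2)=-2k^2+5k-13$ (the paper's ``$-13k+5k-k^2$'' is garbled, though again negative for all $k$); your lists $1$, $k-4$, $2k^2-2k+7$, $3k-6$, $6k^2+3$, $12k^2+3k$ all check out. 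Second, the step you flag as delicate --- that $\beta_3^6(h_2)<0$ by itself only rules out $d=6$, while $\qdepth$ is a maximum --- is exactly what the paper leaves implicit: the authors rely on the standard fact in this theory (cf.\ \cite{lucrare7}) that the set of feasible $d$ is downward closed, which follows by induction on $k$ from $\beta_k^{d}=\beta_k^{d+1}+\beta_{k-1}^{d}$, so infeasibility at $d=6$ excludes every $d\geq 6$. Your substitute --- the specialization $\beta_3^{d+1}(h_2)=\beta_3^d(h_2)-\beta_2^d(h_2)$ of the same Pascal recurrence, combined with the example-specific positivity $\beta_2^d(h_2)=ef(d)>0$ forced by $\Delta(\alpha,\beta)<0$ --- is a correct, self-contained alternative; it is slightly less general than the down-closure lemma (it exploits that this particular $f$ has no real roots), but it buys a proof that needs nothing beyond the remark's own first half, and it neatly turns the obstruction to applying Lemma \ref{l13} into the very engine of the upper bound. (One microscopic quibble: $\beta_1^5(h_2)=k-4$ is already $\geq 0$ at $k=4$, so $k\geq 5$ is imposed for safety rather than strictly ``binding''.)
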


\begin{teor}\label{teo11}
Let $h\in\mathcal H_0$, $h_2(j)=aj^2+bj+e$, $j\geq 0$. 
If $b<0$ and $b^2>4ae$ then $$\qdepth(h_2)\leq 13.$$
\end{teor}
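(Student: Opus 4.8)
The plan is to reuse the dichotomy behind Theorem~\ref{teo1}(2), replacing the quadratic coefficient $\beta_2$ by the cubic one $\beta_3$ precisely on the region where the quadratic test is inconclusive. First I would clear the trivial cases. If $a+b<0$ then $\qdepth(h_2)=0$ by Lemma~\ref{l1}(3). If $a+b\ge 0$ then Lemma~\ref{l1}(1) gives $h_2(j)>0$ for all $j$ and $t:=\alpha+\beta\ge 0$; moreover $c(h_2)=\lfloor t\rfloor+1$ by \eqref{ch}, so whenever $t<12$ we immediately get $\qdepth(h_2)\le c(h_2)\le 12$. Thus the only work is the range $t\ge 12$, where I would split on $\Delta(\alpha,\beta)$: if $\Delta(\alpha,\beta)>\tfrac14$ then Lemma~\ref{cheie2} gives $\qdepth(h_2)\le 11$, and it remains to treat $\Delta(\alpha,\beta)\le\tfrac14$ --- exactly the region containing the family of Remark~\ref{remi}, where $f$ has no real roots and Lemma~\ref{l13} cannot be used.

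On that region I would pass to the cubic $g(x)=\tfrac1e\beta_3^x(h_2)$. Expanding it is routine, and at $x=14$ a direct computation collapses to the affine expression
$$g(14)=39\alpha+57\beta-297,\qquad\text{equivalently}\qquad \beta_3^{14}(h_2)=39a+57b-297e.$$
The analogue of Lemma~\ref{l13} holds verbatim for $\beta_3$: since $\beta_3^{14}(h_2)=e\,g(14)$ and, by the Pascal-type recurrence $\beta_k^{d}=\beta_k^{d-1}-\beta_{k-1}^{d-1}$ (which one checks directly), the admissible depths $\{d:\beta_k^d(h_2)\ge 0\ \forall k\le d\}$ form an initial segment, a value $g(14)<0$ forces $\qdepth(h_2)<14$ as soon as $14\le c(h_2)+1$. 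The latter is guaranteed here, because $t\ge 12$ gives $c(h_2)=\lfloor t\rfloor+1\ge 13$.

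It then suffices to prove $g(14)<0$ throughout $\Delta(\alpha,\beta)\le\tfrac14$. The decisive point is that this inequality, rewritten via $\beta=t-\alpha$, reads $t^2-3t-4\alpha-\tfrac74\le\tfrac14$, that is,
$$\alpha\ge\frac{t^2-3t-2}{4}.$$
Thus exactly when the quadratic test fails, the leading coefficient of $h_2$ is forced to be large. Substituting $\beta=t-\alpha$ gives $g(14)=-18\alpha+57t-297$, whose coefficient of $\alpha$ is negative, so the lower bound on $\alpha$ yields
$$g(14)\le-\frac{9(t^2-3t-2)}{2}+57t-297=\frac{-9t^2+141t-576}{2}.$$
The numerator is a downward parabola in $t$ with discriminant $141^2-4\cdot 9\cdot 576=-855<0$, hence strictly negative for every $t$. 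Therefore $g(14)<0$, and $\qdepth(h_2)\le 13$ in all cases.

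The genuinely delicate part is the last paragraph: one must pick the index $d=14$ so that simultaneously $d>5$ (making $g(d)$ decreasing in $\alpha$, so that the forced largeness of $\alpha$ pushes $g$ negative) and $d\le c(h_2)+1$ stays available once $t\ge 12$. Everything else --- the reduction via Lemma~\ref{l1}, the appeal to Lemma~\ref{cheie2}, the expansion of $g$, and the fact that the admissible depths form an initial segment so that the cubic analogue of Lemma~\ref{l13} is legitimate --- is routine.
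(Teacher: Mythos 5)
Your proposal is correct and follows essentially the same route as the paper: both split on whether $\Delta(\alpha,\beta)>\tfrac14$ (handled by Lemma~\ref{cheie2}) or $\Delta(\alpha,\beta)\le\tfrac14$, and in the latter case both show $\beta_3^{14}(h_2)=e\,(-18\alpha+57t-297)<0$ via a quadratic with negative discriminant. The only difference is cosmetic --- you eliminate $\alpha$ using $\alpha\ge\frac{t^2-3t-2}{4}$ where the paper eliminates $t$ via $t\le\frac{3+\sqrt{16\alpha+17}}{2}$ --- and your version has the merit of making explicit the appeal to Lemma~\ref{cheie2} and the initial-segment property that the paper leaves implicit.
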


\begin{proof}
We consider the function
\begin{equation}\label{gdx}
g(x)=\frac{1}{e}\beta_3^x(h_2)=-\binom{x}{3}+\binom{x-1}{2}(\alpha+\beta+1)-(x-2)(4\alpha+2\beta+1)+(9\alpha+3\beta+1).
\end{equation}
By straightforward computations, from \eqref{gdx} we get
\begin{equation}\label{gdex}
g(x)=\alpha(\frac{1}{2}x^2-\frac{11}{2}x+18)+\beta(\frac{1}{2}x^2-\frac{7}{2}x+8)+(-\frac{1}{6}x^3+x^2-\frac{17}{6}x+4).
\end{equation}
Let $t=\alpha+\beta$. By replacing $\beta$ with $t-\alpha$ in \eqref{gdex} we get
\begin{equation}\label{gdeex}
g(x)=-2\alpha(x-5)+\frac{1}{2}t(x^2-7x+16)+(-\frac{1}{6}x^3+x^2-\frac{17}{6}x+4).
\end{equation}
In particular, from \eqref{gdeex} we get
\begin{equation}\label{g14}
g(14)=3(19t-6\alpha-99).
\end{equation}
Assume $\Delta(\alpha,\beta)\leq \frac{1}{4}$. From \eqref{tdab} it follows that
$$\Delta(\alpha,\beta)=t^2 -7t + 4\beta - \frac{7}{4} = t^2 - 3t - 4\alpha - \frac{7}{4} \leq \frac{1}{4},$$
which is equivalent to 
$$h(t)=t^2-3t-(4\alpha+2)\leq 0.$$
The discriminant of $h(t)$ is $\tilde{\Delta}=16\alpha+17$, hence $h(t)\leq 0$ implies
\begin{equation}\label{oho}
t\leq \frac{3+\sqrt{16\alpha+17}}{2}.
\end{equation} 
Let $s=\sqrt{16\alpha+17}$. We have $\alpha=\frac{s^2-17}{16}$. From \eqref{g14} and \eqref{oho} it follows that
$$\frac{1}{3}g(14) \leq   -\frac{1}{8}(3s^2 -76s + 513)$$
Since $76^2-4\cdot 3\cdot 513=-380<0$, it follows that $g(14)<0$. Therefore, from \eqref{gdx} we get
$\beta_3^{14}(h_2)<0$ and thus we are done. 
\end{proof}

\begin{cor}\label{cor11}
Let $h\in\mathcal H_0$, $h_2(j)=aj^2+bj+e$, $j\geq 0$. Then $\qdepth(h_2)\leq 13$. In particular,
Conjecture \ref{conj} holds for $n=2$.
\end{cor}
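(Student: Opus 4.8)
The plan is to assemble the corollary directly from the three bounds established above through an exhaustive case analysis on the sign of $b$ and on the sign of the discriminant $\Delta=b^2-4ae$. Since $h\in\mathcal H_0$ forces $h_2(j)\geq 0$ for every $j\geq 0$, the quadratic $h_2$ cannot have a negative leading coefficient, so $a>0$ and the standing hypotheses of the section are automatically in force; this is exactly the observation recorded at the start of Section 2.

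First I would dispose of the case $b\geq 0$, where Theorem \ref{teo1}(1), itself a specialization of Theorem \ref{t1}, gives $\qdepth(h_2)\leq 8$. For $b<0$ I would then split according to $\Delta$. If $b^2\leq 4ae$, that is $\Delta\leq 0$, then Theorem \ref{teo1}(2) yields $\qdepth(h_2)\leq 11$; if instead $b^2>4ae$, that is $\Delta>0$, then Theorem \ref{teo11} yields $\qdepth(h_2)\leq 13$. These three cases are mutually exclusive and jointly exhaustive over all admissible $(a,b,e)$, so in every case
$$\qdepth(h_2)\leq\max\{8,11,13\}=13.$$

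To deduce Conjecture \ref{conj} for $n=2$ I would simply take the constant $C(2)=13$: every $h\in\mathcal H_0$ of the form $h(j)=a_2j^2+a_1j+a_0$ with $a_i\in\mathbb Z$ and $a_0>0$ is precisely a function $h_2$ of the type treated here (setting $a=a_2$, $b=a_1$, $e=a_0$), and the bound above is uniform in the coefficients. I expect no substantive obstacle in this final step, since all of the analytic work has already been carried out in the preceding lemmas and theorems. The only point that warrants a moment's care is verifying that the three cases genuinely cover all admissible triples $(a,b,e)$, which they do once one records that positivity of $h_2$ forces $a>0$; with that in hand the corollary is immediate.
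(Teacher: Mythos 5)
Your proposal is correct and follows essentially the same route as the paper, which proves the corollary by combining Theorem \ref{teo1} (cases $b\geq 0$ and $b<0$ with $b^2\leq 4ae$) with Theorem \ref{teo11} (case $b<0$ with $b^2>4ae$); your write-up merely makes the case split and the choice $C(2)=13$ explicit.
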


\begin{proof}
It follows from Theorem \ref{teo1} and Theorem \ref{teo11}.
\end{proof}

\section{The cubic function}

Let $h_3\in \mathcal H_0$ be defined by
$$h_3(j):=aj^3+bj^2+cj+e\text{ for all }j\geq 0,$$ 
where $a,b,c,e\in \mathbb Z$, $a\neq 0$ and $h(0)=e>0$. 

Since, by hypothesis, $h(j)\geq 0$ for all $j\geq 0$, it follows that $a>0$.
We denote $\alpha:=\frac{a}{e}$, $\beta:=\frac{b}{e}$ and $\gamma:=\frac{c}{e}$. By \eqref{11}, we have that
\begin{equation}\label{ch3}
\qdepth(h_3)\leq c(h_3)=\left\lfloor \frac{h(1)}{h(0)} \right \rfloor = \lfloor \alpha + \beta + \gamma \rfloor + 1.
\end{equation}

\begin{lema}\label{123}
If $\alpha+\beta+\gamma<2$ then $\qdepth(h_3)=c(h_3)$. Also, if 
$\alpha+\beta+\gamma\geq 2$ then $\qdepth(h_3)\geq 3$.
\end{lema}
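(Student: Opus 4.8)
The plan is to mirror the proof of Lemma~\ref{l2}, replacing $\alpha+\beta$ by $t:=\alpha+\beta+\gamma$ throughout and using the upper bound $\qdepth(h_3)\le c(h_3)=\lfloor t\rfloor+1$ from \eqref{ch3}. The two assertions are complementary: when $t<2$ the bound $c(h_3)\in\{0,1,2\}$ is small, so it suffices to produce a matching lower bound; when $t\ge 2$ one only needs a depth-$3$ certificate. In both regimes I would first record the relevant coefficients in closed form. Since $h_3(1)=e(1+t)$, one has $\beta_0^d(h_3)=e>0$ and $\beta_1^d(h_3)=h_3(1)-d\,h_3(0)=e(1+t-d)$, whence $\beta_1^1=et$, $\beta_1^2=e(t-1)$ and $\beta_1^3=e(t-2)$ are governed purely by $t$. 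The remaining coefficients are $\beta_2^2(h_3)=h_3(0)-h_3(1)+h_3(2)=7a+3b+c+e$, $\beta_2^3(h_3)=3h_3(0)-2h_3(1)+h_3(2)=6a+2b+2e$, and $\beta_3^3(h_3)=-h_3(0)+h_3(1)-h_3(2)+h_3(3)=20a+6b+2c$.

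For the first assertion I would split on $\lfloor t\rfloor$, noting that $h_3(1)\ge 0$ forces $t\ge -1$. If $t\in[-1,0)$ then $\beta_1^1=et<0$, so $\qdepth(h_3)<1$ and hence $\qdepth(h_3)=0=c(h_3)$. If $t\in[0,1)$ then $\beta_0^1,\beta_1^1\ge 0$ give $\qdepth(h_3)\ge 1$, matching $c(h_3)=1$. If $t\in[1,2)$ then $c(h_3)=2$ and I must check $\beta_0^2,\beta_1^2,\beta_2^2\ge 0$; the first two are immediate from $t\ge 1$, and everything reduces to $\beta_2^2=7a+3b+c+e\ge 0$.

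For the second assertion, with $t\ge 2$ we have $c(h_3)\ge 3$, and I would exhibit a depth-$3$ certificate: $\beta_0^3=e>0$ and $\beta_1^3=e(t-2)\ge 0$ come for free, so the statement reduces to the two inequalities $\beta_2^3=6a+2b+2e\ge 0$ and $\beta_3^3=20a+6b+2c\ge 0$.

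The main obstacle is precisely the nonnegativity of these ``top'' coefficients $\beta_2^2$, $\beta_2^3$ and $\beta_3^3$. Unlike the quadratic case---where $\beta_2^2(h_2)=3a+b+e\ge 2a+2e>0$ followed from $a+b\ge e$ alone---here each is an alternating combination of $h_3(0),\dots,h_3(3)$ whose sign is not controlled by $t$ by itself; for instance $\beta_2^2=h_3(2)-(a+b+c)=h_3(2)-et$, which is negative as soon as $h_3(2)<et$. The plan is therefore to extract positivity from the full strength of the hypothesis $h_3(j)\ge 0$ at several integer points together with $a>0$: I would try to write each coefficient as a nonnegative combination of $h_3(0),h_3(1),h_3(2),h_3(3)$ plus a manifestly nonnegative remainder, exploiting the finite differences $\Delta^2 h_3(0)=6a+2b$ and $\Delta^3 h_3\equiv 6a>0$ and the convexity of the value sequence that these force. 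I expect this to be the delicate heart of the argument, and the step at which one must verify carefully whether $t\ge 1$ (resp. $t\ge 2$) genuinely suffices or whether, as in the discriminant analysis of $h_2$, an auxiliary inequality relating $a,b,c,e$ must be brought in.
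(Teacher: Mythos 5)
Your reduction is correct as far as it goes, and the worry you voice at the end is exactly the right one: the nonnegativity of the ``top'' coefficients $\beta_2^2(h_3)=7a+3b+c+e$, $\beta_2^3(h_3)=6a+2b+2e$ and $\beta_3^3(h_3)=20a+6b+2c$ is \emph{not} a consequence of $t:=\alpha+\beta+\gamma\geq 1$ (resp. $t\geq 2$), and no rewriting of these quantities as nonnegative combinations of $h_3(0),\dots,h_3(3)$ can exist. The paper gives you nothing to compare against here: its entire proof is the sentence ``similar to the proof of Lemma~\ref{l2}'', and the analogy collapses precisely at the step you flag --- in the quadratic case $\beta_2^2(h_2)=3a+b+e=2a+(a+b)+e$ is forced positive by $a+b\geq e$, while in the cubic case the lemma is in fact false. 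For the second assertion take $h_3(j)=j^3-5j^2+10j+1$: since $h_3'(x)=3x^2-10x+10$ has negative discriminant, $h_3$ is increasing and lies in $\mathcal H_0$, with $t=6\geq 2$ and $c(h_3)=7$; but $\beta_2^3(h_3)=3\cdot 1-2\cdot 7+9=-2<0$, and one checks $\beta_2^d(h_3)=\frac{1}{2}(d^2-15d+32)<0$ for all $3\leq d\leq 7$, so $\qdepth(h_3)=2<3$. For the first assertion take $h_3(j)=2j^3-9j^2+10j+2$, whose values $2,5,2,5,26,77,\dots$ are positive integers, so $h_3\in\mathcal H_0$ with $t=\frac{3}{2}<2$ and $c(h_3)=2$; but $\beta_2^2(h_3)=2-5+2=-1<0$, hence $\qdepth(h_3)=1\neq c(h_3)$.

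So the gap in your write-up is genuine but unfillable, and it is the paper's gap rather than yours. What survives of your plan is exactly the part governed by $t$ alone: the cases $t\in[-1,0)$ and $t\in[0,1)$ of the first assertion are complete as you wrote them, since $\beta_0^1=e$ and $\beta_1^1=et$ settle both. From $t\geq 1$ onward one needs an extra hypothesis forcing $h_3(0)-h_3(1)+h_3(2)\geq 0$, $3h_3(0)-2h_3(1)+h_3(2)\geq 0$ and $-h_3(0)+h_3(1)-h_3(2)+h_3(3)\geq 0$ --- the ``auxiliary inequality'' you anticipated; note that even the hypothesis $b^2\leq 3ac$ of Theorem~\ref{teo2}(2) does not suffice (e.g. $h_3(j)=j^3-17j^2+100j+1$ is increasing, satisfies $b^2\leq 3ac$ and $t=84$, yet $\beta_2^3=-26<0$). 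Fortunately the results that invoke the regime $t\geq 2$ (Lemma~\ref{l133} and Theorem~\ref{teo2}) only use implications of the form $\beta_2^d<0\Rightarrow\qdepth(h_3)<d$, so they do not actually depend on the lower bound asserted in this lemma.
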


\begin{proof}
The proof is the similar to the proof of Lemma \ref{l2}.
\end{proof}

The derivative of $h_3(x)$ is $h_3'(x)=3ax^2+2bx+c$ and it has the discriminant $$\Delta'=4b^2-12ac.$$
Therefore, if $b^2\leq 3ac$ then $h_3(x)$ is nondecreasing on $\mathbb R$ and, in particular,
the conditions $h_3(j)\geq h_3(0)>0$, for all $j\geq 0$, hold.
We have that
\begin{align*}
& \beta_2^d(h_3)=\binom{d}{2}h_3(0)-\binom{d-1}{1}h_3(1)+\binom{d-2}{0}h_3(2)=\frac{d(d-1)}{2}e - (d-1)(a+b+c+e) + \\ 
& + 8a+4b+2c+e = \frac{e}{2}d^2 - (a+b+c+\frac{3}{2}e)d + 9a+5b+3c+2e.
\end{align*}
We consider the quadratic function
\begin{equation}\label{fdx3}
f(x):=\frac{1}{e}\beta_2^x(h_3)=\frac{1}{2}x^2-(\alpha+\beta+\gamma+\frac{3}{2})x+(9\alpha+5\beta+3\gamma+2),
\end{equation}
which has the discriminant
\begin{equation}\label{delta3}
\Delta(\alpha,\beta,\gamma):=(\alpha+\beta+\gamma)^2-15\alpha-7\beta-3\gamma-\frac{7}{4}.
\end{equation}
With the above notations, we have:

\begin{lema}\label{l133}
Assume that $\alpha+\beta+\gamma\geq 2$. If there exists an integer $3\leq d\leq c(h_3)+1$ 
such that $f(d)<0$ then $\qdepth(h_3)<d$.
\end{lema}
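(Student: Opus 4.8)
The plan is to regard this lemma as the exact cubic analogue of Lemma \ref{l13}, exploiting the fact that the only computation that matters has already been performed. By the very definition of $f$ in \eqref{fdx3} we have $f(x)=\frac{1}{e}\beta_2^x(h_3)$, hence for every integer $d$ the clean identity $\beta_2^d(h_3)=e\,f(d)$ holds. No further expansion of binomial coefficients is needed; this identity is the entire engine of the argument.

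From here the first step is immediate: since $e=h_3(0)>0$ and $f(d)<0$ by hypothesis, the identity forces $\beta_2^d(h_3)<0$. Because $d\ge 3$, the index $k=2$ lies in the admissible range $0\le k\le d$, so the nonnegativity condition defining $\qdepth$ fails at level $k=2$ for this value of $d$. In other words, $d$ does not belong to the set $A:=\{m\ge 0 : \beta_k^m(h_3)\ge 0 \text{ for all } 0\le k\le m\}$, whose maximum is $\qdepth(h_3)$.

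The remaining step is to turn ``$d\notin A$'' into ``$\qdepth(h_3)<d$'', and for this I would invoke the downward-closedness of $A$. This in turn follows from the Pascal-type recurrence $\beta_k^m(h_3)=\beta_k^{m-1}(h_3)-\beta_{k-1}^{m-1}(h_3)$, equivalently $\beta_k^{m-1}(h_3)=\beta_k^m(h_3)+\beta_{k-1}^{m-1}(h_3)$: if $m\in A$, then an induction on $k$ with base case $\beta_0^{m-1}(h_3)=h_3(0)>0$ shows $\beta_k^{m-1}(h_3)\ge 0$ for all $0\le k\le m-1$, so $m-1\in A$. Consequently $d\notin A$ gives $\qdepth(h_3)=\max A<d$. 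For the extreme value $d=c(h_3)+1$ one may bypass this entirely, since \eqref{ch3} already yields $\qdepth(h_3)\le c(h_3)=d-1<d$; the hypothesis $d\le c(h_3)+1$ guarantees we are always in one of these two situations.

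I do not expect a genuine obstacle: the statement is a one-line consequence of \eqref{fdx3} together with the monotonicity of the admissibility condition, exactly paralleling Lemma \ref{l13}. The single point that deserves explicit mention — and which is used silently in the proof of Lemma \ref{l13} — is precisely the passage from the failure of the single inequality $\beta_2^d(h_3)\ge 0$ to the bound $\qdepth(h_3)<d$, which is where the downward-closedness of $A$ enters.
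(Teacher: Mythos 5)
Your proposal is correct and follows essentially the same route as the paper, whose proof of Lemma \ref{l133} simply points back to Lemma \ref{l13}, i.e.\ to the identity $\beta_2^d(h_3)=e\,f(d)$ from \eqref{fdx3}. The only difference is that you make explicit the downward-closedness of the admissible set via the recurrence $\beta_k^{d}=\beta_k^{d-1}-\beta_{k-1}^{d-1}$, a step the paper uses silently; this is a welcome clarification but not a different argument.
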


\begin{proof}
The proof is similar to the proof of Lemma \ref{l13}.
\end{proof}

\begin{lema}\label{l134}
If $b<0$ and $\Delta'\leq 0$, i.e. $b^2\leq 3ac$, then 
$$15\alpha+7\beta+3\gamma \leq s_0(\alpha+\beta+\gamma),$$
where $s_0:=39+16\sqrt{3}\approx 66.71$.
\end{lema}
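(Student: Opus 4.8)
The plan is to reduce the claimed linear inequality to the single auxiliary estimate $-\beta\le\frac{\sqrt3}{2}(\alpha+\gamma)$, and then to choose the constant $s_0$ so that, after substituting this estimate, the remaining expression becomes a manifestly nonnegative combination of $\alpha$ and $\gamma$. First I would record the signs forced by the hypotheses: dividing $b<0$ by $e>0$ and $b^2\le 3ac$ by $e^2>0$ gives $\beta<0$ and $\beta^2\le 3\alpha\gamma$, and since $\beta\ne 0$ and $\alpha>0$ the latter forces $\gamma>0$. The key elementary input is the arithmetic–geometric mean inequality $\alpha\gamma\le\left(\frac{\alpha+\gamma}{2}\right)^2$, which together with the constraint yields $\beta^2\le 3\alpha\gamma\le\frac34(\alpha+\gamma)^2$; taking square roots and using $\alpha+\gamma>0$ and $\beta<0$ gives
$$-\beta\le\frac{\sqrt3}{2}(\alpha+\gamma).$$

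Next I would rewrite the target inequality $15\alpha+7\beta+3\gamma\le s_0(\alpha+\beta+\gamma)$ in the algebraically equivalent form
$$(s_0-15)\alpha+(s_0-7)\beta+(s_0-3)\gamma\ge 0.$$
Because $s_0-7>0$, the middle term can be bounded below by replacing $\beta$ with $-\frac{\sqrt3}{2}(\alpha+\gamma)$, so it suffices to establish the stronger statement
$$\Big[(s_0-15)-\tfrac{\sqrt3}{2}(s_0-7)\Big]\alpha+\Big[(s_0-3)-\tfrac{\sqrt3}{2}(s_0-7)\Big]\gamma\ge 0.$$
The whole point of the value $s_0=39+16\sqrt3$ is that it is precisely the solution of $(s_0-15)=\frac{\sqrt3}{2}(s_0-7)$, equivalently $s_0(2-\sqrt3)=30-7\sqrt3$, so that $s_0=\frac{30-7\sqrt3}{2-\sqrt3}=39+16\sqrt3$. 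With this choice the coefficient of $\alpha$ vanishes identically, while the coefficient of $\gamma$ equals $(s_0-3)-(s_0-15)=12>0$. Since $\alpha,\gamma\ge 0$, the displayed inequality holds, which proves the lemma.

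The only genuinely delicate point will be the bookkeeping of which coefficient is binding. Because $s_0-15<s_0-3$, the constraint arising from the $\alpha$-term is the tighter of the two, so one must calibrate $s_0$ against it and then \emph{verify} (rather than assume) that the $\gamma$-coefficient remains nonnegative; everything else is routine algebra, and I expect no real obstacle. I would also note, by way of orientation, that the slack in this argument enters entirely through the step $\alpha\gamma\le\left(\frac{\alpha+\gamma}{2}\right)^2$, which is sharp only when $\alpha=\gamma$; consequently $s_0=39+16\sqrt3$ is not optimal for the underlying extremal problem, but it is exactly the form needed to feed into the subsequent lower bound for the discriminant $\Delta(\alpha,\beta,\gamma)=(\alpha+\beta+\gamma)^2-(15\alpha+7\beta+3\gamma)-\frac74$ and thereby control $\qdepth(h_3)$.
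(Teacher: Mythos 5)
Your proof is correct and follows essentially the same route as the paper: the same sign analysis, the same AM--GM bound $-\beta\le\frac{\sqrt3}{2}(\alpha+\gamma)$, and the same calibration of $s_0$ from the linear condition $s_0-15\ge\frac{\sqrt3}{2}(s_0-7)$. The only cosmetic difference is that the paper first relaxes $3\gamma$ to $15\gamma$ to symmetrize the $\alpha$ and $\gamma$ coefficients before applying the bound, whereas you keep both coefficients and verify that the $\gamma$-coefficient ends up equal to $12>0$; the two computations are equivalent.
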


\begin{proof}
First, note that, since $a>0$ and $3ac\geq b^2>0$, we have $c>0$. Thus $\alpha>0$, $\beta<0$ and $\gamma>0$.
From the inequality of arithmetic and geometric means, it follows that 
\begin{equation}\label{kik1}
\beta \geq - \sqrt{3\alpha\gamma} \geq -\frac{\sqrt{3}}{2}(\alpha+\gamma).
\end{equation}
Let $s>15$. We have that
\begin{equation}\label{kik2}
15\alpha+7\beta+3\gamma < 15(\alpha+\gamma)+7\beta \leq s(\alpha+\beta+\gamma)
\text{ if and only if }(s-15)(\alpha+\gamma) \geq (s-7)(-\beta).
\end{equation}
From \eqref{kik1} and \eqref{kik2}, it follows that
$$15\alpha+7\beta+3\gamma\leq k(\alpha+\beta+\gamma),\text{ if }s-15\geq \frac{\sqrt 3}{2}(s-7),$$
which is equivalent to $s\geq s_0$, as required. 
\end{proof}

\begin{lema}\label{cheita}
Let $t_0:=\frac{s_0+\sqrt{s_0^2+8}}{2}\approx 66.74$.
If $b<0$ and $b^2\leq 3ac$ then
$$\Delta(\alpha,\beta,\gamma)>\frac{1}{4},\text{ for }\alpha+\beta+\gamma > t_0.$$
In particular, the inequality holds for $\alpha+\beta+\gamma\geq 67$.
\end{lema}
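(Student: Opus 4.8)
The plan is to reduce everything to a single-variable inequality in $t:=\alpha+\beta+\gamma$, using the linear estimate already established in Lemma \ref{l134}. First I would rewrite the discriminant \eqref{delta3} as
$$\Delta(\alpha,\beta,\gamma) = t^2 - (15\alpha+7\beta+3\gamma) - \frac{7}{4}.$$
The hypotheses of the present lemma, namely $b<0$ and $b^2\leq 3ac$, are exactly those of Lemma \ref{l134}, so I may invoke it to replace the linear combination by its upper bound $15\alpha+7\beta+3\gamma \leq s_0\, t$. This yields the lower bound
$$\Delta(\alpha,\beta,\gamma) \geq t^2 - s_0\, t - \frac{7}{4}.$$

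Next I would observe that the target inequality $\Delta(\alpha,\beta,\gamma) > \frac{1}{4}$ is implied by $t^2 - s_0\, t - \frac{7}{4} > \frac{1}{4}$, which is equivalent to the quadratic inequality $t^2 - s_0\, t - 2 > 0$. The roots of $t^2 - s_0\, t - 2$ are $\frac{s_0 \pm \sqrt{s_0^2+8}}{2}$; since $t>0$, only the larger root $t_0 = \frac{s_0+\sqrt{s_0^2+8}}{2}$ is relevant, and the quadratic is strictly positive precisely when $t > t_0$. Chaining these implications gives the main claim. For the \emph{in particular} assertion I would then check numerically that $t_0 \approx 66.74 < 67$, so that every $t \geq 67$ satisfies $t > t_0$ and hence $\Delta(\alpha,\beta,\gamma) > \frac{1}{4}$.

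There is essentially no serious obstacle: the argument is a clean reduction to a one-variable quadratic, and the substantive work was already packaged into Lemma \ref{l134}. The only points requiring a moment of care are that the bound from Lemma \ref{l134} points in the correct direction (it upper-bounds the subtracted term, hence lower-bounds $\Delta$), and that the chosen integer threshold $67$ genuinely exceeds $t_0$; the latter follows from the crude estimate $16\sqrt{3} < 27.72$, giving $s_0 < 66.72$ and consequently $t_0 < 67$.
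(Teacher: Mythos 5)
Your proof is correct and follows essentially the same route as the paper: it reduces to the one-variable bound $\Delta(\alpha,\beta,\gamma) \geq t^2 - s_0 t - \frac{7}{4}$ via Lemma \ref{l134} and observes that $t_0$ is exactly the larger root of $t^2 - s_0 t - 2$. (You even state the direction of the key inequality correctly, where the paper's displayed $\leq$ is evidently a typo for $\geq$.)
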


\begin{proof}
Let $t:=\alpha+\beta+\gamma$. From \eqref{delta3} and Lemma \ref{l134} it follows that
$$\Delta(\alpha,\beta,\gamma)\leq t^2-s_0t-\frac{7}{4}.$$
Therefore, in order to complete the proof it is enough to show that $t_0(t_0-s_0)>2$,
which is obviously true from the hypothesis.
\end{proof}

\begin{teor}\label{teo2}
Let $h_3\in\mathcal H_0$, $h_3(j)=aj^3+bj^3+cj+e$, $j\geq 0$.
\begin{enumerate}
\item[(1)] If $b,c\geq 0$ then $\qdepth(h_3)\leq 16$.
\item[(2)] If $b<0$ and $b^2\leq 3ac$ then $\qdepth(h_3)\leq 67$.
\end{enumerate}
\end{teor}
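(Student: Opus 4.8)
The plan is to dispatch the two parts separately, with part (1) being essentially free and part (2) following the template already worked out for $h_2$ in Theorem \ref{teo1}(2). For part (1), when $b,c\geq 0$ every coefficient of $h_3$ in positive degree is nonnegative and $h_3(0)=e>0$, so $h_3$ meets the hypotheses of Theorem \ref{t1} with $n=3$ and $\qdepth(h_3)\leq 2^{3+1}=16$ at once. For part (2), I write $t:=\alpha+\beta+\gamma$ and split on the size of $t$. If $t<67$ then $c(h_3)=\lfloor t\rfloor+1\leq 67$ and \eqref{ch3} already gives $\qdepth(h_3)\leq 67$; so all the content lives in the range $t\geq 67$.

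In that range I want to exhibit a single fixed integer $d$ with $f(d)<0$ and $3\leq d\leq c(h_3)+1$, and then invoke Lemma \ref{l133} to conclude $\qdepth(h_3)<d$. Guided by the quadratic case, where $d=12$ produced the bound $11$, and by the target bound $67$, the natural guess is $d=68$. Since $f$ is an upward parabola with discriminant $\Delta(\alpha,\beta,\gamma)$ and vertex abscissa $t+\tfrac32$ (by \eqref{fdx3} and \eqref{delta3}), the condition $f(68)<0$ is equivalent to $\bigl(68-(t+\tfrac32)\bigr)^2<\Delta(\alpha,\beta,\gamma)$, i.e. to $(t-\tfrac{133}{2})^2<\Delta(\alpha,\beta,\gamma)$.

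Here Lemma \ref{l134} does the real work: under $b<0$, $b^2\leq 3ac$ it gives $15\alpha+7\beta+3\gamma\leq s_0 t$, so by \eqref{delta3}
$$\Delta(\alpha,\beta,\gamma)=t^2-(15\alpha+7\beta+3\gamma)-\tfrac74\ \geq\ t^2-s_0 t-\tfrac74.$$
Substituting this lower bound, it suffices to verify $(t-\tfrac{133}{2})^2<t^2-s_0 t-\tfrac74$, which after cancelling $t^2$ collapses to the linear inequality $(133-s_0)t>4424$, i.e. $(94-16\sqrt3)\,t>4424$. At $t=67$ this reads $6298-1072\sqrt3>4424$, equivalently $\sqrt3<1874/1072$, which holds; and since $133-s_0>0$ the left-hand side increases in $t$, so the inequality persists for all $t\geq 67$, giving $f(68)<0$ throughout the range.

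It remains to record the side conditions of Lemma \ref{l133}: for $t\geq 67$ we have $\lfloor t\rfloor\geq 67$, hence $c(h_3)+1\geq 69$ and indeed $3\leq 68\leq c(h_3)+1$, while $t\geq 67\geq 2$. Lemma \ref{l133} then yields $\qdepth(h_3)<68$, i.e. $\qdepth(h_3)\leq 67$, which closes the case and the theorem. The only delicate point — and the sole place where the constant $67$ (rather than something smaller) is forced — is the numerical margin in $(94-16\sqrt3)t>4424$: the threshold $4424/(94-16\sqrt3)$ sits just below $67$, mirroring the value $t_0\approx 66.74$ of Lemma \ref{cheita}, so I would take care to keep the split point $t=67$ and the choice $d=68$ locked together and check that $t=67$ genuinely clears the bound, as the computation above confirms.
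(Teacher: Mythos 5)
Your proof is correct and follows essentially the same strategy as the paper: part (1) is the direct application of Theorem \ref{t1}, and part (2) splits at $t=\alpha+\beta+\gamma=67$, uses \eqref{ch3} below the threshold, and above it combines the bound $15\alpha+7\beta+3\gamma\leq s_0t$ of Lemma \ref{l134} with Lemma \ref{l133} at $d=68$. The only (harmless) difference is that where the paper routes the final step through Lemma \ref{cheita} and the $\varphi$-function argument of Lemma \ref{cheie2}, you verify $f(68)<0$ by reducing $(t-\tfrac{133}{2})^2<t^2-s_0t-\tfrac74$ to the linear inequality $(94-16\sqrt3)t>4424$ and checking it at $t=67$ --- a cleaner and fully explicit version of the computation the paper leaves as ``similar to Theorem \ref{teo1}''.
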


\begin{proof}
(1) It is a particular case of Theorem \ref{t1}.

(2) The proof is similar to the proof of Theorem \ref{teo1}, using Lemma \ref{cheita}.
\end{proof}

\begin{obs}\label{ultima}
We consider the function
\begin{equation}\label{3gdx}
g(x)=\frac{1}{e}\beta_3^x(h_3)=-\binom{x}{3}+\binom{x-1}{2}(\alpha+\beta+\gamma+1)-(x-2)(8\alpha+4\beta+2\gamma+1)+
(27\alpha+9\beta+3\gamma+1).
\end{equation}
By straightforward computations, from \eqref{3gdx} we get
\begin{equation}\label{3gdex}
g(x)=\alpha(\frac{1}{2}x^2-\frac{19}{2}x+44)+\beta(\frac{1}{2}x^2-\frac{11}{2}x+18)
+\gamma(\frac{1}{2}x^2-\frac{7}{2}x+8)+(-\frac{1}{6}x^3+x^2-\frac{17}{6}x+4).
\end{equation}
Note that, from \eqref{3gdx}, if $g(d)<0$ then $\qdepth(h_3)<d$. In particular, from \eqref{3gdex}, we can deduce
assertions like: If $g(11)=18\beta+30\gamma-128<0$ then $\qdepth(h_3)<10$.
\end{obs}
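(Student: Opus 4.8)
The remark packages three assertions: the two closed forms \eqref{3gdx} and \eqref{3gdex} for $g(x)=\frac{1}{e}\beta_3^x(h_3)$, the implication ``$g(d)<0\Rightarrow\qdepth(h_3)<d$'', and the displayed numerical instance at $x=11$. My plan is to treat the two formulas as a direct computation, isolate the implication as the only conceptual point (it is the same principle already used silently in Lemmas \ref{l13} and \ref{l133}), and recover the instance by a one-line substitution. First I would derive \eqref{3gdx} straight from the definition $\beta_3^x(h_3)=-\binom{x}{3}h_3(0)+\binom{x-1}{2}h_3(1)-\binom{x-2}{1}h_3(2)+\binom{x-3}{0}h_3(3)$: since $h_3(0)=e$, $h_3(1)=a+b+c+e$, $h_3(2)=8a+4b+2c+e$ and $h_3(3)=27a+9b+3c+e$, dividing by $e$ and writing $\alpha=a/e$, $\beta=b/e$, $\gamma=c/e$ converts the four Hilbert values into $1$, $\alpha+\beta+\gamma+1$, $8\alpha+4\beta+2\gamma+1$, $27\alpha+9\beta+3\gamma+1$, which is exactly \eqref{3gdx} once $\binom{x-2}{1}=x-2$. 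To pass to \eqref{3gdex} I would expand $\binom{x}{3}=\frac{1}{6}x^3-\frac{1}{2}x^2+\frac{1}{3}x$ and $\binom{x-1}{2}=\frac{1}{2}x^2-\frac{3}{2}x+1$ and collect the coefficients of $\alpha$, $\beta$, $\gamma$ and the free term separately; the only care needed is that the constant part $-\frac{1}{6}x^3+x^2-\frac{17}{6}x+4$ gathers the contribution of $h_3(0)=e$ together with all four $+1$'s.

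The conceptual step is the implication ``$g(d)<0\Rightarrow\qdepth(h_3)<d$''. Since $e>0$, $g(d)<0$ is equivalent to $\beta_3^d(h_3)<0$, and for $d\geq 3$ the index $k=3$ lies in the admissible range $0\leq k\leq d$, so the defining condition of $\qdepth(h_3)\geq d$ already fails at $k=3$. To upgrade ``fails at $d$'' to ``$\qdepth<d$'' I would use the downward-closedness of the predicate $P(d):=$``$\beta_k^d(h_3)\geq 0$ for all $0\leq k\leq d$''. This rests on the telescoping identity $\beta_k^{d-1}(h)=\sum_{i=0}^{k}\beta_i^{d}(h)$, obtained by iterating the Pascal-type recursion $\beta_k^d=\beta_k^{d-1}-\beta_{k-1}^{d-1}$ (itself immediate from $\binom{x-j}{k-j}=\binom{x-1-j}{k-j}+\binom{x-1-j}{k-1-j}$). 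Indeed, if all $\beta_i^{d}\geq 0$ for $0\leq i\leq d$, then for every $k\leq d-1$ the quantity $\beta_k^{d-1}$ is a sum of nonnegative terms, so $P(d)\Rightarrow P(d-1)$, hence $P$ can only fail from some index upward. Therefore $g(d)<0$ forces $P(d)$ to fail and $\qdepth(h_3)=\max\{d':P(d')\}<d$, exactly as in Lemma \ref{l133}.

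Finally, the sample deduction follows by substitution: plugging $x=11$ into \eqref{3gdex} and using $\frac{1}{2}(121)-\frac{19}{2}(11)+44=0$, $\frac{1}{2}(121)-\frac{11}{2}(11)+18=18$, $\frac{1}{2}(121)-\frac{7}{2}(11)+8=30$ and $-\frac{1}{6}(1331)+121-\frac{17}{6}(11)+4=-128$ gives $g(11)=18\beta+30\gamma-128$; the implication at $d=11$ then shows that $18\beta+30\gamma<128$ forces $\qdepth(h_3)<11$, the kind of coefficient-dependent bound recorded at the end of the remark. I expect the only genuine obstacle to be the downward-closedness of $P$, which supplies the one-sided monotonicity needed to read off an upper bound from a single negative $\beta_3^d$; everything else is bookkeeping, and if one prefers one may instead cite this monotonicity from \cite{lucrare7}, just as it is invoked tacitly in Lemmas \ref{l13} and \ref{l133}.
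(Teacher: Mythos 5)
Your proposal is correct and takes essentially the same route the paper leaves implicit: both closed forms and the value $g(11)=18\beta+30\gamma-128$ check out, and your telescoping identity $\beta_k^{d-1}(h)=\sum_{i=0}^{k}\beta_i^{d}(h)$ supplies exactly the downward monotonicity of the condition ``$\beta_k^d(h)\geq 0$ for all $k\leq d$'' that the paper (as in Lemmas \ref{l13} and \ref{l133}) tacitly borrows from \cite{lucrare7}. One point in your favor: your conclusion $\qdepth(h_3)<11$ (i.e.\ $\leq 10$) is the correct consequence of the implication at $d=11$, whereas the paper's printed ``$\qdepth(h_3)<10$'' is an off-by-one slip that does not follow from $g(11)<0$ alone, since $\beta_3^{10}=\beta_0^{11}+\beta_1^{11}+\beta_2^{11}+\beta_3^{11}$ need not be negative when only $\beta_3^{11}<0$.
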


\subsection*{Aknowledgments} 

The second author, Mircea Cimpoea\c s, was supported by a grant of the Ministry of Research, Innovation and Digitization, CNCS - UEFISCDI, 
project number PN-III-P1-1.1-TE-2021-1633, within PNCDI III.

\end{document}